\numberwithin{equation}{section}
\theoremstyle{plain}
\newtheorem{theorem}{\bf Theorem}[section]
\newtheorem{lemma}[theorem]{\bf Lemma}
\newtheorem{corollary}[theorem]{\bf Corollary}
\newtheorem{proposition}[theorem]{\bf Proposition}
\theoremstyle{definition}
\newtheorem{definition}[theorem]{\bf Definition}
\newtheorem{remark}[theorem]{\bf Remark}
\newtheorem{example}[theorem]{\bf Example}
\newtheorem{algorithm}[theorem]{\bf Algorithm}
\newcommand{\bt}{\begin{theorem}}
	\newcommand{\et}{\end{theorem}}
\newcommand{\bl}{\begin{lemma}}
	\newcommand{\el}{\end{lemma}}
\newcommand{\bd}{\begin{definition}}
	\newcommand{\ed}{\end{definition}}
\newcommand{\bc}{\begin{corollary}}
	\newcommand{\ec}{\end{corollary}}
\newcommand{\bp}{\begin{proof}}
	\newcommand{\ep}{\end{proof}}
\newcommand{\bx}{\begin{example}}
	\newcommand{\ex}{\end{example}}
\newcommand{\br}{\begin{remark}}
	\newcommand{\er}{\end{remark}}
\newcommand{\be}{\begin{equation}}
	\newcommand{\ee}{\end{equation}}
\newcommand{\ba}{\begin{align}}
	\newcommand{\ea}{\end{align}}
\newcommand{\bn}{\begin{enumerate}}
	\newcommand{\en}{\end{enumerate}}
\newcommand{\bcs}{\begin{cases}}
	\newcommand{\ecs}{\end{cases}}
\renewcommand{\section}{\@startsection{section}{1}{0mm}
	{-\baselineskip}{0.5\baselineskip}{\bf\leftline}}
\begin{document}
	
	\title[A similarity canonical form for max-plus matrices and its eigenproblem]{A similarity canonical form for max-plus matrices and its eigenproblem}
		\author{Haicheng Zhang and Xiyan Zhu}
	
\address{Ministry of Education Key Laboratory of NSLSCS, School of Mathematical Sciences, Nanjing Normal University,
Nanjing 210023, P.~R.~China}
\email{zhanghc@njnu.edu.cn (Zhang)}
\address{School of Mathematical Sciences, Nanjing Normal University, Nanjing 210023, P.~R.~China}
	\email{06210513@njnu.edu.cn (Zhu)}

	\subjclass[2010]{15A18, 15A80.}
	\keywords{Max-plus algebra; Matrix similarity; Canonical form; Eigenvalue; Eigenvector}
	
	\begin{abstract}
We provide a necessary and sufficient condition for matrices in the max-plus algebra to be pseudo-diagonalizable, calculate the powers of pseudo-diagonal matrices and prove the invariance of optimal-node matrices and separable matrices under similarity. As an application, we determine the eigenvalues and eigenspaces of pseudo-diagonalizable matrices.
	\end{abstract}
	
	\maketitle
	
\section{Introduction}
The max-plus algebra is just the semi-ring $\overline{\mathbb{R}}=\{\mathbb{R}\cup\{-\infty\},\oplus,\otimes\}$ with operations defined by $a\oplus b=max\{a,b\}$ and $a\otimes  b=a+b$. The additive and multiplicative identities are $\varepsilon=-\infty$ and $0$, respectively. The max-plus algebra is one of idempotent semi-rings which have been applied in various fields, and it is becoming increasingly useful because it can transform certain nonlinear systems appearing for instance in scheduling theory, automata theory and discrete event system into linear ones, such that familiar linear algebra techniques are applicable (see e.g.\cite{cuninghame1, cuninghame2,gondran1,zimmermann}).

In the max-plus algebra, the similarity relation of matrices plays a crucial role, as it can simplify the form of a matrix while preserving most of its properties (see e.g. \cite{bapat1,brualdi1,butkovic1,butkovic2,rosen}). Since similarity is an equivalence relation, it's natural to find a canonical form for each such equivalence class. In classical linear algebra, significant efforts have been devoted to exploring this topic, and remarkable results have been achieved, for example, the real symmetric matrices are diagonalizable. However, this is not true in the max-plus algebra, where a diagonal matrix is defined as a matrix with real numbers on the diagonal and $\varepsilon$ elsewhere. It's easy to verify that a diagonal matrix can only be similar to another diagonal matrix with the same diagonal entries. Recently, Mukherjee \cite{arxiv} proposed a canonical form for the congruence relation of matrices in the max-plus algebra, called the pseudo-diagonal matrix. In this paper, we study which matrices are similar to pseudo-diagonal matrices.

The eigenproblem, closely related to similarity, is of key importance in the max-plus algebra. It has been studied from the 1960's in connection with the analysis of a steady-state behaviour of certain production systems (see e.g. \cite{cuninghame3}). A full solution in the case of irreducible matrices has been provided in \cite{cuninghame1} and a general spectral theorem was given in \cite{gaubert1}. For a general $n\times n$ real matrix, there exist $O(n^3)$ algorithms to find its maximum cycle mean. For some special kinds of matrices, the computation can be performed in simpler ways. Thus, from the point of view of computation, the discussion of eigenproblem for special matrices is significant (see e.g.\cite{butkovic3,gavalec1,gavalec2,plavka1,plavka2,wanghuili}). In this paper, we study the eigenproblem of the matrices which are similar to pseudo-diagonal matrices.
	
This paper is organized as follows. Firstly, some necessary notions and results in the max-plus algebra are given in Section $2$. We provide a necessary and sufficient condition for matrices in the max-plus algebra to be pseudo-diagonalizable, as well as an algorithm for verifying pseudo-diagonalizable matrices in Section 3. In Section 4, we calculate the powers of pseudo-diagonal matrices, prove the invariance of optimal-node matrices and separable matrices under
similarity, and determine optimal-node matrices and separable matrices in pseudo-diagonalizable
matrices. In the final section, as an application, we determine the eigenvalues and eigenspaces of pseudo-diagonalizable matrices.
\section{Preliminaries}
Recall that the max-plus algebra $\overline{\mathbb{R}}=\{\mathbb{R}\cup\{-\infty\},\oplus,\otimes\}$ has operations defined by $a\oplus b=max\{a,b\}$ and $a\otimes  b=a+b$, and the additive and multiplicative identities are $\varepsilon=-\infty$ and $0$, respectively.
Clearly, for any $a\in \overline{\mathbb{R}}$, we have $a\oplus\varepsilon=a$ and $a\otimes\varepsilon=\varepsilon$. For each positive integer $k$ and $a\in \overline{\mathbb{R}}$, we denote by $a^k$ the $k$-th power of $a$ with respective to the operation $\otimes$. For any $a,r\in\mathbb{R}$, $a^{r}$ denotes the usual multiplication $ra$ in $\mathbb{R}$.

For two matrices $A=(a_{ij})$ and $B=(b_{ij})$ in $\overline{\mathbb{R}}$, if $A$ and $B$ are both of $m\times n$ type, define $A\oplus B$ to be the matrix $C=(c_{ij})_{m\times n}$ with $c_{ij}=a_{ij}\oplus b_{ij}$ for all $i,j$; if $A$ is of $m\times s$ type and $B$ is of $s\times n$ type, define $A\otimes B$ to be the matrix $C=(c_{ij})_{m\times n}$ with $c_{ij}=\sum_{k=1}^{s}a_{ik}\otimes b_{kj}$ for all $i$,$j$, where $\sum$ is the summation with respective to the operation $\oplus$; and define $\lambda\otimes A$ to be the matrix $(\lambda\otimes a_{ij})_{m\times s}$ for any $\lambda\in\overline{\mathbb{R}}$. For two matrices $A=(a_{ij})$ and $B=(b_{ij})$ of the same type in $\overline{\mathbb{R}}$, define $A\le B$ if and only if $a_{ij}\le b_{ij}$ for all $i,j.$ If $A$ is square, for each positive integer $k$ we denote by $A^k$ the $k$-th power of $A$ with respective to the operation $\otimes$. We denote by $\overline{\mathbb{R}}^{m\times n}$ the set of all matrices of $m\times n$ type in $\overline{\mathbb{R}}$, similarly, we have the notation ${\mathbb{R}}^{m\times n}$. For any positive integer $n$, we use $[n]$ to represent the set $\{1,2,\dots,n\}$. Let $\mathcal {P}_n$ be the set of cyclic permutations of subsets of $[n]$. For simplicity of notation, we may also write $ab$ and $AB$ for $a\otimes b$ and $A\otimes B$, respectively.
We denote the vector whose entries are all $\varepsilon$ by $\boldsymbol{\varepsilon}$, and the usual zero vector and zero matrix are both denoted by $0$. For a finite set $X$, we denote by $|X|$ the cardinality of $X$.
	
Given a non-empty subset $S\subseteq\overline{\mathbb{R}}^{n}:=\overline{\mathbb{R}}^{n\times 1}$, we say that a vector $v\in\overline{\mathbb{R}}^n$ is a \emph{max-combination} of $S$ if $v=\sum_{x\in S}\alpha_x\otimes x$ for some $\alpha_x\in\overline{\mathbb{R}}$, where only a finite number of $\alpha_x$ are finite. The set $S$ is said to be \emph{dependent} if there exists an element $v\in S$ such that $v$ is a max-combination of $S\setminus\{v\}$. Otherwise, $S$ is \emph{independent}. The non-empty set $S$ is called a \emph{subspace} if $a\otimes u\oplus b\otimes v\in S$ for any $u,v\in S$ and $a,b\in\overline{\mathbb{R}}$.  Clearly, the set $\{\boldsymbol{\varepsilon}\}$ and $\overline{\mathbb{R}}$ are subspaces, called {\em trivial subspaces}. The set of all max-combinations of $S$ is denoted by span($S$). It is easy to see that $T:=$span($S$) is a subspace, and then $S$ is called a \emph{set of generators} of $T.$  The set $S$ is called a \emph{basis} of $T$ if it is an independent set of generators for $T$. Every basis of a finitely generated subspace $T$ has the same number of elements (c.f.\cite{butkovic2}), and the number of elements in any basis is called the \emph{dimension} of the subspace $T$.
For any matrix $A\in\overline{\mathbb{R}}^{m\times n}$, we always denote by $A_i$ the $i$-th column vector of $A$ for any $i\in[n]$.

Given a matrix $A\in\overline{\mathbb{R}}^{n\times n}$, the problem of finding vectors $x\in\overline{\mathbb{R}}^n$ with $x\neq\boldsymbol{\varepsilon}$ and $\lambda\in\overline{\mathbb{R}}$ such that
	\begin{equation*}
		A\otimes x=\lambda\otimes x
	\end{equation*}
	is called the \emph{eigenproblem} in the max-plus algebra, and such $x$ and $\lambda$ are called the \emph{eigenvectors} and \emph{eigenvalues} of $A$, respectively. The \emph{eigenspace} of $A$, denoted by $V(A)$, is the set consisting of $\boldsymbol{\varepsilon}$ and all eigenvectors of $A$. The dimension of $V(A)$ is denoted by $d(A)$. For any $\sigma=(i_1,i_2,\dots,i_k)\in\mathcal {P}_n$, $k$ is called the length of $\sigma$, denoted by $l(\sigma)$. We define the weight of $\sigma$ corresponding to $A$ as
	\begin{equation*}
		\omega(\sigma,A)=a_{i_1i_2}a_{i_2i_3}\dots a_{i_{k-1}i_k}a_{i_ki_1}.
	\end{equation*}
	Furthermore, the \emph{mean} of $\sigma$ is defined as
	\begin{equation*}
		\mu(\sigma,A)=\frac{\omega(\sigma,A)}{l(\sigma)},
	\end{equation*}
	and the \emph{maximum cycle mean} of $A$ is defined as
	\begin{equation*}
		\lambda(A)=\mathop{max}\limits_{\sigma\in\mathcal {P}_n}\mu(\sigma,A).
	\end{equation*}
\begin{theorem}\label{lamdaeigen}\textup{(\hspace{-0.005cm}\cite{bapat2})}
Let $A\in\overline{\mathbb{R}}^{n\times n},$ then $\lambda(A)$ is the greatest eigenvalue of $A$. If $A\in\mathbb{R}^{n\times n}$, then $\lambda(A)$ is the unique eigenvalue of $A$ and $V(A)\setminus\{\boldsymbol{\varepsilon}\}\subseteq\mathbb{R}^n$.
\end{theorem}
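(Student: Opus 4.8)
The plan is to establish the two assertions in turn: first that the maximum cycle mean $\lambda(A)$ is always realized as an eigenvalue and dominates every other eigenvalue, and then that for a finite matrix it is the \emph{only} eigenvalue, with all eigenvectors finite. Throughout I would pass to the normalized matrix $B$ obtained from $A$ by subtracting $\lambda(A)$ from every entry (that is, $B=\lambda(A)^{-1}\otimes A$), which has maximum cycle mean $0$. Since $A\otimes x=\eta\otimes x$ is equivalent to $B\otimes x=(\eta-\lambda(A))\otimes x$, the eigenvalues of $A$ are exactly those of $B$ shifted by $\lambda(A)$, so it suffices to understand $B$. When $\lambda(A)=\varepsilon$, i.e.\ $A$ has no cycle of finite weight, the assertion that $\varepsilon$ is the greatest eigenvalue must be checked by a separate direct argument producing some $x\neq\boldsymbol{\varepsilon}$ with $A\otimes x=\boldsymbol{\varepsilon}$; I would dispatch that case first and assume $\lambda(A)\in\mathbb{R}$ thereafter.

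The \emph{main obstacle} is producing an eigenvector for $\lambda(A)$, for which I would use the Kleene star of $B$. Because $\lambda(B)=0$, any walk can be shortened by deleting cycles of nonpositive weight without decreasing its weight, so the matrix $B^*$ with $(i,j)$ entry $\max_{k\ge 0}(B^k)_{ij}$ is well defined, has finite entries, and equals $I\oplus B\oplus\cdots\oplus B^{n-1}$, where $I$ is the max-plus identity ($0$ on the diagonal and $\varepsilon$ off it). If $i$ lies on a critical cycle $\sigma$ (one with $\mu(\sigma,A)=\lambda(A)$, hence $\otimes$-weight $0$ in $B$), then $(B^*)_{ii}=0$, and the key point is that the $i$-th column $(B^*)_i$ satisfies $B\otimes(B^*)_i=(B^*)_i$. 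Indeed $B\otimes B^*=B\oplus B^2\oplus\cdots$ agrees with $B^*=I\oplus(B\oplus B^2\oplus\cdots)$ in every column indexed by a critical node, since there the diagonal contribution of $I$ is already matched by a zero-weight closed walk. Thus $(B^*)_i$ is an eigenvector of $B$ for the eigenvalue $0$, and translating back, it is an eigenvector of $A$ for $\lambda(A)$.

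To see that no eigenvalue exceeds $\lambda(A)$, suppose $A\otimes x=\eta\otimes x$ with $x\neq\boldsymbol{\varepsilon}$ and pick an index $i_1$ with $x_{i_1}$ finite. Row $i_1$ of the eigenequation reads $\max_j(a_{i_1 j}+x_j)=\eta+x_{i_1}$, so some index $i_2$ with $x_{i_2}$ finite satisfies $a_{i_1 i_2}+x_{i_2}=\eta+x_{i_1}$. Iterating yields an infinite saturated path $i_1,i_2,i_3,\dots$ of finite-entry indices, which must revisit some index since $[n]$ is finite, producing a cycle $\tau$ along which the equalities $a_{i_k i_{k+1}}+x_{i_{k+1}}=\eta+x_{i_k}$ telescope to $\omega(\tau,A)=l(\tau)\,\eta$, i.e.\ $\eta=\mu(\tau,A)\le\lambda(A)$. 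This argument simultaneously shows that every eigenvalue equals some cycle mean.

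Finally I would treat the case $A\in\mathbb{R}^{n\times n}$. Here every cycle mean is finite, so $\lambda(A)$ is finite, and by the previous paragraph any eigenvalue $\eta$ equals a cycle mean and is therefore finite. Given an eigenvector $x\neq\boldsymbol{\varepsilon}$, fixing a finite entry $x_j$ gives $(A\otimes x)_i\ge a_{ij}+x_j>\varepsilon$ for every $i$, whence $\eta+x_i=(A\otimes x)_i$ is finite and so is $x_i$; this proves $V(A)\setminus\{\boldsymbol{\varepsilon}\}\subseteq\mathbb{R}^n$. For uniqueness, I would fix a critical cycle $\sigma=(j_1,\dots,j_l)$ and use the coordinatewise inequalities $\eta+x_{j_k}=(A\otimes x)_{j_k}\ge a_{j_k j_{k+1}}+x_{j_{k+1}}$; summing around $\sigma$, the finite quantities $x_{j_k}$ cancel and leave $l\eta\ge\omega(\sigma,A)=l\,\lambda(A)$, so $\eta\ge\lambda(A)$. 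Together with $\eta\le\lambda(A)$ this forces $\eta=\lambda(A)$, which establishes uniqueness and completes the proof.
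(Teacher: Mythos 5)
The paper does not actually prove this theorem: it is quoted from Bapat--Stanford--van den Driessche \cite{bapat2} and used as a black box, so there is no in-paper argument to compare yours against. Judged on its own, your proof follows the standard route (normalize to $B=\lambda(A)^{-1}\otimes A$, build an eigenvector from a column of the Kleene star indexed by a critical node, and bound every finite eigenvalue by a cycle mean via a saturated walk), and the main steps are sound; in particular the existence argument $B\otimes (B^{*})_i=(B^{*})_i$ at a critical node $i$, the finiteness of eigenvalue and eigenvector entries when $A$ is finite, and the uniqueness argument obtained by summing the coordinatewise inequalities around a critical cycle are all correct.

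Two points need repair. First, the case $\lambda(A)=\varepsilon$ is announced but never dispatched: you still owe the construction of an $x\neq\boldsymbol{\varepsilon}$ with $A\otimes x=\boldsymbol{\varepsilon}$. This is easy but not vacuous --- when $\lambda(A)=\varepsilon$ the digraph of finite entries is acyclic, hence has a node $i$ with no incoming finite edge, and the vector with $0$ in position $i$ and $\varepsilon$ elsewhere is then an eigenvector for $\varepsilon$; without some such sentence the first assertion of the theorem is unproved for matrices with no finite-weight cycle (that no eigenvalue exceeds $\varepsilon$ in this case does follow from your saturated-walk argument, since the cycle it produces has finite weight). Second, the claim that $B^{*}$ ``has finite entries'' is false for reducible matrices (already $B=I$ gives $(B^{*})_{ij}=\varepsilon$ off the diagonal); what you actually need, and what is true, is only that the entries do not diverge to $+\infty$, so that $B^{*}=I\oplus B\oplus\cdots\oplus B^{n-1}$ is well defined, and that $(B^{*})_{ii}=0$ at critical nodes. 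Neither issue affects the architecture of the proof.
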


Given $\sigma\in\mathcal {P}_n$, it is called a \emph{critical cycle} if $\mu(\sigma,A)=\lambda(A)$. The nodes appearing in critical cycles are called the \emph{critical nodes} of $A$. The set of critical nodes of $A$ is denoted by $N_c(A).$
	We say $i,j\in N_c(A)$ are \emph{equivalent}, denoted by $i\sim j$, if they belong to the same critical cycle, otherwise, they are called \emph{non-equivalent} and denoted by $i\nsim j.$ Clearly, this gives an equivalence relation on $N_c(A),$ and for any $i\in N_c(A)$ we denote by $\bar{i}$ the equivalence class of $i$.	
For any $A\in\mathbb{R}^{n\times n}$, define the \emph{transitive closure} of $A$ as $\Gamma(A)=A\oplus A^2\oplus\dots\oplus A^n.$
	Let $A_\lambda=(\lambda(A))^{-1}\otimes A$. Then we have that $\Gamma(A_\lambda)$ contains at least one column in which the diagonal entry of $\Gamma(A_\lambda)$ is $0$ and all such columns are eigenvectors of $A$ (c.f.\cite{cuninghame1}), which are called \emph{fundamental eigenvectors}. For each $i\in[n]$, we write $g_i$ for the $i$-th column vector $\Gamma(A_\lambda)_i$ of $\Gamma(A_\lambda)$.
	\begin{theorem}\textup{(\hspace{-0.005cm}\cite{akian})}\label{eigenspace2.4}
		For any $A\in\mathbb{R}^{n\times n}$, $V(A)$ is a nontrivial subspace and we have a basis of $V(A)$ by taking exactly one $g_{\bar{i}}$ for each equivalence class $\bar{i}$ in $N_c(A)/\sim.$
	\end{theorem}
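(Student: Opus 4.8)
The plan is to assume, as the excerpt already grants, that each fundamental eigenvector is genuinely an eigenvector, and to reduce everything to the normalized matrix $A_\lambda=(\lambda(A))^{-1}\otimes A$. Since $\lambda(A)$ is the unique eigenvalue of a real matrix (Theorem \ref{lamdaeigen}) and $A\otimes x=\lambda(A)\otimes x$ holds exactly when $A_\lambda\otimes x=x$, one has $V(A)=V(A_\lambda)$, so I may assume $\lambda(A)=0$; then every cycle has nonpositive weight, $\Gamma(A)$ stabilizes, and a node $i$ is critical precisely when $\Gamma(A)_{ii}=0$. Hence the fundamental eigenvectors are exactly $\{g_i:i\in N_c(A)\}$. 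That $V(A)$ is a subspace is immediate from $A(a\otimes x\oplus b\otimes y)=a\otimes Ax\oplus b\otimes Ay$, and it is nontrivial because at least one column with vanishing diagonal entry exists. Throughout I read $\Gamma(A)_{ki}$ as the maximal weight of a walk from $i$ to $k$; crucially, since $A\in\mathbb{R}^{n\times n}$ has all finite entries, walks of every length exist between any two nodes.

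The heart of the argument is spanning: every eigenvector is a max-combination of the $g_i$ with $i\in N_c(A)$. Given $x\in V(A)\setminus\{\boldsymbol{\varepsilon}\}$ with $Ax=x$, I would use $x=A^m\otimes x$ for all $m\ge1$, so that for each coordinate $k$, $x_k$ equals the maximum of $w(W)+x_{p}$ over all length-$m$ walks $W$ from a node $p$ to $k$, where $w(W)$ denotes the weight of $W$. Taking $m>n$ and an optimal such $W$, the pigeonhole principle forces $W$ to contain a cycle; that cycle cannot have negative weight, since deleting it would yield a strictly heavier walk with the same endpoints and contradict optimality, and it cannot have positive weight since $\lambda(A)=0$. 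Hence the cycle is critical and $W$ passes through some critical node $i$. Splitting $W$ at $i$ and bounding the two pieces by $x_i$ and by $\Gamma(A)_{ki}=g_{ki}$ (reducing the second piece to length $\le n$ by excising nonpositive cycles) yields $x_k\le g_{ki}\otimes x_i$. Since the reverse inequality $\bigoplus_{i\in N_c(A)}x_i\otimes g_i\le x$ follows directly from $\Gamma(A)x=x$, I obtain $x=\bigoplus_{i\in N_c(A)}x_i\otimes g_i$. I expect this step --- converting the one-step fixed-point identity into a long-walk optimization and extracting a critical passage --- to be the main obstacle.

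It then remains to pass to one representative per class and to prove independence. For $i\sim j$ on a common critical cycle, concatenating optimal walks shows $g_{ji}+g_{ij}\le0$, while the critical arcs give the reverse inequality, so $g_{ij}=-g_{ji}$; routing walks through $i$ and $j$ then forces $g_{ki}=g_{ji}+g_{kj}$ for every $k$, i.e. $g_i=g_{ji}\otimes g_j$, so all generators in one class coincide up to a scalar and a single $g_{\bar i}$ suffices. For independence I would argue by contradiction: if $g_i=\bigoplus_{i'}\gamma_{i'}\otimes g_{i'}$ over representatives of the other classes, then reading the $i$-th coordinate (where $g_{ii}=0$) selects some $i'$ with $\gamma_{i'}=-g_{ii'}$, and reading the $i'$-th coordinate gives $g_{i'i}+g_{ii'}\ge0$; this is a closed walk through $i$ and $i'$ of weight $\ge0$, hence $=0$ and critical, forcing $i\sim i'$ and contradicting that $i'$ lies in a different class. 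Spanning together with these two facts shows that $\{g_{\bar i}:\bar i\in N_c(A)/\!\sim\}$ is a basis of $V(A)$, and the equality of basis cardinalities recalled in the preliminaries makes the resulting dimension well defined.
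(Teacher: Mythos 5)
This theorem is quoted from \cite{akian}; the paper gives no proof of it, so there is nothing internal to compare your argument against and I can only assess it on its own terms. What you propose is essentially the standard proof of the finite (irreducible) max-plus spectral theorem, and it is correct in outline: reduce to $\lambda(A)=0$; prove spanning by writing $x=A_\lambda^m\otimes x$ for $m>n$, extracting from an optimal walk a cycle that must have weight exactly $0$ (nonpositive because the maximum cycle mean is $0$, nonnegative by optimality of the walk), and splitting the walk at a critical node to get $x_k\le g_{ki}\otimes x_i$, the reverse inequality coming from $\Gamma(A_\lambda)\otimes x=x$; prove that columns indexed by equivalent critical nodes are proportional via $g_{ij}\otimes g_{ji}=0$; and prove independence by reading off the $i$-th and $i'$-th coordinates. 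The preliminary reductions (uniqueness of the eigenvalue, finiteness of eigenvectors, $V(A)$ being a nontrivial subspace) are all covered by Theorem \ref{lamdaeigen} and the facts the paper already grants about fundamental eigenvectors.

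Two steps are glossed and should be written out. First, in the splitting step the bound $w(W_1)\le g_{ki}$ needs the case $k=i$ treated separately: there $W_1$ is a closed walk, excising all cycles may leave the empty walk, and you must instead invoke $g_{ii}=0$ for critical $i$; for $k\ne i$ you excise nonpositive cycles to reach an elementary path of length between $1$ and $n-1$, whose weight is dominated by $\Gamma(A_\lambda)_{ki}$. Second, in the independence step the closed walk through $i$ and $i'$ of weight $0$ is in general not an elementary cycle, so it does not directly exhibit a critical cycle containing both nodes. You must decompose it into elementary cycles, observe that each has weight $0$ (hence is critical) because every cycle of $A_\lambda$ has nonpositive weight, and conclude that $i$ and $i'$ lie in the same strongly connected component of the critical digraph; that this yields $i\sim i'$ in the sense used here (membership in a common critical cycle, or the equivalence relation it generates) is a standard lemma on the critical digraph that should be stated or cited explicitly. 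With these two points repaired, the argument is complete.
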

A matrix $A$ is called \emph{finite}, if all entries of $A$ are taken from $\mathbb{R}$.
A square matrix is called \emph{diagonal}, if its diagonal entries are taken from $\mathbb{R}$ and off-diagonal entries are $\varepsilon$. Denote by ${\rm diag}(d_1,\dots,d_n)$ the diagonal matrix with diagonal entries $d_1,\dots,d_n\in\mathbb{R}$. The matrix $I:={\rm diag}(0,\dots,0)$ is called the {\em unit matrix}, and for a finite square matrix $A$, we may write $A^0$ for $I$. Any matrix which can be obtained from the unit (resp. diagonal) matrix by permuting the rows or columns is called a \emph{permutation matrix} (resp. \emph{generalized permutation matrix}). Obviously, given a generalized permutation matrix $A=(a_{ij})\in\overline{\mathbb{R}}^{n\times n}$, there is a unique permutation $\pi$ of $[n]$, called the \emph{permutation of} $A$, such that for any $i, j\in [n]$ we have:
	\begin{equation}\label{gpyx}
		a_{ij}\in\mathbb{R}\Longleftrightarrow j=\pi(i).
	\end{equation}
	For each matrix $A\in\overline{\mathbb{R}}^{n\times n}$, it is called {\em invertible}, if there exists a matrix $B\in\overline{\mathbb{R}}^{n\times n}$ such that $A\otimes B=I=B\otimes A$. It is easy to see that if such $B$ exists, it is unique, called the {\em inverse} of $A$ and denoted by $A^{-1}$. Generalized permutation matrices play a crucial role in the max-algebra as they are the only invertible matrices.

In what follows, for convenience and brevity, we write $A_{ij}$ for the $(i,j)$-entry of any matrix $A$. In particular, $(A^k)_{ij}$ denotes the $(i,j)$-entry of the $k$-th power of any square matrix $A$, and $(A^{-1})_{ij}$ denotes the $(i,j)$-entry of the inverse of any invertible matrix $A$.

\begin{theorem}\label{kelip}
{\textup{(\hspace{-0.005cm}\cite{cuninghame1})}}
Let $A=(a_{ij})\in\overline{\mathbb{R}}^{n\times n}$. Then $A$ is invertible
if and only if $A$ is a generalized permutation matrix. Moreover,  if $A$ is invertible, the entries of $A^{-1}$ are given as follows:
\begin{flalign*}
(A^{-1})_{ij}=\begin{cases}
a_{ji}^{-1} &i=\pi(j)\\
\varepsilon &i\neq\pi(j),
\end{cases}
\end{flalign*}
where $\pi$ is the permutation of $A$.
\end{theorem}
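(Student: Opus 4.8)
The statement has two parts: the characterization of invertibility and the explicit inverse formula. The plan is to treat the two implications separately and to obtain the formula as a by-product of the easy direction together with the uniqueness of the inverse already noted in the text.

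For the direction ``generalized permutation matrix $\Rightarrow$ invertible'', I would simply exhibit a candidate inverse and verify it. Writing $\pi$ for the permutation of $A$ as in (\ref{gpyx}), set $B=(b_{ij})$ with $b_{ij}=a_{ji}^{-1}$ when $i=\pi(j)$ and $b_{ij}=\varepsilon$ otherwise, and compute $A\otimes B$ and $B\otimes A$ entrywise. In $(A\otimes B)_{ik}=\sum_j a_{ij}\otimes b_{jk}$ a summand is finite only when both $a_{ij}$ and $b_{jk}$ are finite, that is, only when $j=\pi(i)$ and $j=\pi(k)$; since $\pi$ is a bijection this forces $i=k$, and then the single surviving summand is $a_{i\pi(i)}\otimes a_{i\pi(i)}^{-1}=0$. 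Hence $A\otimes B=I$, and a symmetric computation gives $B\otimes A=I$. Because the inverse is unique once it exists, this simultaneously proves invertibility and yields the displayed formula for $A^{-1}$.

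The substantive direction is ``invertible $\Rightarrow$ generalized permutation matrix''. Suppose $A\otimes B=I=B\otimes A$. The key structural observation is the absence of additive cancellation: since $\oplus=\max$, the entry $(A\otimes B)_{ik}=\max_j(a_{ij}+b_{jk})$ is finite if and only if $a_{ij}$ and $b_{jk}$ are simultaneously finite for some $j$. Passing to the $0$--$1$ support matrices $M=\mathrm{supp}(A)$ and $N=\mathrm{supp}(B)$ (with a $1$ exactly where the entry lies in $\mathbb{R}$), this says that the Boolean product $MN$ equals the support of $A\otimes B=I$, namely the identity; symmetrically $NM=I$ in Boolean arithmetic. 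I would then prove the purely combinatorial claim that a Boolean matrix with a two-sided Boolean inverse is a permutation matrix. For each row index $i$ put $P_i=\{j:M_{ij}=1\}$. The diagonal entries of $MN=I$ force each $P_i$ nonempty, while the vanishing of the off-diagonal entries forces that whenever $j\in P_i$, row $j$ of $N$ is supported only at column $i$. Since every row of $N$ is nonzero (from the diagonal of $NM=I$), distinct indices $i\ne i'$ yield disjoint $P_i,P_{i'}$; as these are $n$ pairwise disjoint nonempty subsets of $[n]$, each must be a singleton and they partition $[n]$. Thus $M$ has exactly one $1$ in each row and each column, i.e. $A$ has exactly one finite entry per row and column arranged by a permutation, which is precisely the definition of a generalized permutation matrix.

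The main obstacle, and the only place where the max-plus structure (as opposed to ordinary linear algebra) is genuinely used, is the no-cancellation step that reduces the analytic identity $A\otimes B=I$ to the Boolean identity $MN=I$; after that the argument is combinatorial pigeonhole. I would take care to state the no-cancellation fact cleanly, since it is exactly the failure of additive inverses in $\overline{\mathbb{R}}$ that makes invertibility so restrictive, and it is what rules out the cancellation phenomena permitting many invertible matrices in classical linear algebra.
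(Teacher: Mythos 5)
The paper does not actually prove Theorem~\ref{kelip}: it is stated as a quoted classical result from Cuninghame-Green, so there is no in-paper argument to compare against. Your proposal supplies a complete and correct proof. The easy direction is handled exactly as one would expect (exhibit the candidate $B$ with $b_{ij}=a_{ji}^{-1}$ for $i=\pi(j)$ and $\varepsilon$ otherwise, observe that in $(A\otimes B)_{ik}=\sum_j a_{ij}\otimes b_{jk}$ a summand is finite only when $j=\pi(i)=\pi(k)$, and use injectivity of $\pi$), and invoking the uniqueness of inverses to extract the displayed formula is legitimate since the paper has already recorded that uniqueness. The substantive direction is also sound: the observation that $\oplus=\max$ admits no cancellation, so that the support of $A\otimes B$ equals the Boolean product of the supports, is precisely the point where the idempotent structure enters, and your combinatorial argument that a Boolean matrix with a two-sided Boolean inverse is a permutation matrix (the sets $P_i$ are nonempty and pairwise disjoint, hence singletons partitioning $[n]$) is airtight; the conclusion that $A$ has exactly one real entry in each row and column matches the paper's definition of a generalized permutation matrix as a row/column permutation of a diagonal matrix. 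This is essentially the standard textbook proof of the result, correctly executed.
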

	
	\begin{definition}
For any matrices $A,B\in\overline{\mathbb{R}}^{n\times n}$, we say that $A$ and $B$ are \emph{similar}, if there exists an invertible matrix $P$ such that
$B=P^{-1}AP$.
	\end{definition}
	Let $A,B,P\in\overline{\mathbb{R}}^{n\times n}$ and $P$ be a generalized permutation matrix with permutation $\pi$. Suppose that $A=PBP^{-1}$, then we have
\begin{equation}\label{xsyg}
		\begin{split}
	A_{ij}&=\sum_{k=1}^{n}\sum_{t=1}^{n}P_{ik}B_{kt}(P^{-1})_{tj}\\
	&=P_{i,\pi(i)}B_{\pi(i),\pi(j)}(P_{j,\pi(j)})^{-1}.
\end{split}
	\end{equation}
	In particular, if $P$ is also a permutation matrix, then $A_{ij}=B_{\pi(i),\pi(j)}$.
\begin{lemma}\label{similar eigen}
	Let $A,B\in\mathbb{R}^{n\times n}$ such that $B=PAP^{-1}$ for some invertible matrix $P$, then $\lambda(A)=\lambda(B)$ and $V(B)=\{P\otimes x:x\in V(A)\}$.
	\end{lemma}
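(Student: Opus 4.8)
The plan is to exploit that both $A$ and $B$ are finite, so that Theorem \ref{lamdaeigen} applies: each of them has a \emph{unique} eigenvalue, namely its maximum cycle mean, and every nonzero eigenvector lies in $\mathbb{R}^n$. The structural facts I will use are that $P$ is a generalized permutation matrix (Theorem \ref{kelip}), that $P\otimes P^{-1}=I=P^{-1}\otimes P$, and that the matrix product $\otimes$ is associative.

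For the eigenvalue equality I would avoid a direct cycle computation and instead transport a single eigenvector. By Theorem \ref{eigenspace2.4}, $V(A)$ is nontrivial, so there is some $x\in V(A)$ with $x\neq\boldsymbol{\varepsilon}$; since $A$ is finite, Theorem \ref{lamdaeigen} gives $A\otimes x=\lambda(A)\otimes x$. Then, using $B=PAP^{-1}$ and associativity,
\[
B\otimes(P\otimes x)=P\otimes A\otimes P^{-1}\otimes P\otimes x=P\otimes A\otimes x=P\otimes(\lambda(A)\otimes x)=\lambda(A)\otimes(P\otimes x).
\]
Because $P$ is invertible and $x\neq\boldsymbol{\varepsilon}$, we have $P\otimes x\neq\boldsymbol{\varepsilon}$ (otherwise $x=P^{-1}\otimes(P\otimes x)=\boldsymbol{\varepsilon}$), so $\lambda(A)$ is an eigenvalue of $B$. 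Since $B$ is finite, $\lambda(B)$ is its unique eigenvalue, forcing $\lambda(A)=\lambda(B)$.

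For the eigenspace identity I would prove the two inclusions. The computation just displayed shows $P\otimes x\in V(B)$ for every eigenvector $x\in V(A)$, and $P\otimes\boldsymbol{\varepsilon}=\boldsymbol{\varepsilon}\in V(B)$ handles the zero vector; hence $\{P\otimes x:x\in V(A)\}\subseteq V(B)$. For the reverse inclusion, take $y\in V(B)$ and set $x=P^{-1}\otimes y$, so that $P\otimes x=y$. Using the equivalent relation $A=P^{-1}BP$ together with $\lambda(A)=\lambda(B)$, the same associativity manipulation gives $A\otimes x=\lambda(A)\otimes x$, while $x\neq\boldsymbol{\varepsilon}$ exactly when $y\neq\boldsymbol{\varepsilon}$; thus $x\in V(A)$ and $y=P\otimes x$ lies in the right-hand set. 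Combining the inclusions yields $V(B)=\{P\otimes x:x\in V(A)\}$.

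The content of the lemma is almost entirely bookkeeping once Theorem \ref{lamdaeigen} is invoked; the only points requiring care are the invertibility-based claim that $P\otimes x=\boldsymbol{\varepsilon}$ if and only if $x=\boldsymbol{\varepsilon}$, and the reliance on finiteness, which is what guarantees uniqueness of the eigenvalue and lets the eigenvalue equality follow from transporting one eigenvector rather than from a separate argument. Should a direct approach be preferred instead, an alternative is to establish the cycle-weight invariance $\omega(\sigma,B)=\omega(\pi(\sigma),A)$ for each cyclic permutation $\sigma$, where $\pi$ is the permutation of $P$: substituting $B_{ij}=P_{i,\pi(i)}A_{\pi(i),\pi(j)}(P_{j,\pi(j)})^{-1}$ from \eqref{xsyg} and noting that the $P$-factors telescope around the cycle gives $\lambda(A)=\lambda(B)$ directly. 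However, the eigenvector transport above is shorter and reuses precisely the computation already needed for the eigenspace statement.
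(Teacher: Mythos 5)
Your proposal is correct and follows essentially the same route as the paper: transport a single eigenvector via $B\otimes(P\otimes x)=\lambda(A)\otimes(P\otimes x)$, invoke Theorem \ref{lamdaeigen} for uniqueness of the eigenvalue of a finite matrix to get $\lambda(A)=\lambda(B)$, and use invertibility of $P$ for the eigenspace identity. You merely spell out the two inclusions and the $P\otimes x\neq\boldsymbol{\varepsilon}$ point more explicitly than the paper does.
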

	\begin{proof}
	Let $x\in\mathbb{R}^n$ be an eigenvector of $A$ satisfying $A\otimes x=\lambda(A)\otimes x$. Since $$B\otimes P\otimes x=P\otimes A\otimes x=\lambda(A)\otimes P\otimes x,$$ we obtain that $P\otimes x$ is an eigenvector of $B$ corresponding to the eigenvalue $\lambda(A)$. By Theorem \ref{lamdaeigen}, we have $\lambda(B)=\lambda(A)$. By the invertibility of $P$, we obtain that $V(B)=\{P\otimes x:x\in V(A)\}.$
	\end{proof}
	\section{Pseudo-diagonal matrices as canonical forms under similarity}
In this section, we determine which matrices in the max-plus algebra are similar to the so-called pseudo-diagonal matrices. Firstly, let us give the related definitions as follows.
\begin{definition}
A square matrix is called \emph{pseudo-diagonal}, if its diagonal entries are taken from $\mathbb{R}$ and off-diagonal entries are $0$.
A square matrix is called \emph{pseudo-diagonalizable}, if it is similar to a pseudo-diagonal matrix. Denote by ${\rm pdiag}(d_1,\dots,d_n)$ the pseudo-diagonal matrix with diagonal entries $d_1,\dots,d_n$.
\end{definition}

Let us provide an example of applications of pseudo-diagonal matrices in the practice.
	\begin{example}\label{example MMIPP}
		Suppose that in the \emph{multi-machine interactive production process}, abbreviated as MMIPP, there exist $n$ machines which work in stages. Assume that in each stage all machines simultaneously produce necessary components for the next stage of some or all other machines. For each $i\in[n]$ and non-negative integer $r$, let $x_i(r)$ denote the starting time of the $i$-th machine in the $r$-th stage, and let $a_{ij}$ denote the duration of the operation at which the $j$-th machine prepares necessary components for the $i$-th machine in the next stage. Then \begin{equation*}
			x_i(r + 1)=max(x_1(r)+a_{i1},\dots,x_n(r)+a_{in}),~~i\in [n], r\geq0.
		\end{equation*}
In max-algebraic notation, we have
\begin{equation*}
			x(r+1)=A\otimes x(r),~~r\geq0,
		\end{equation*}
		where $x(r)=(x_1(r),\ldots,x_n(r))^{\rm T}$ and $A=(a_{ij})$ is called a production matrix (c.f. \cite{butkovic2,cuninghame3}).
		
If the production matrix $A$ is pseudo-diagonal, by definition, $a_{ij}=0$ for any $i\neq j$. So, in this case, we have
\begin{equation*}
			x_i(r+1)=max(x_1(r),\dots,x_{i}(r)+a_{ii},\dots,x_n(r)),~~i\in [n],r\geq0,
		\end{equation*}
which means that the $i$-th machine starts the $(r+1)$-th stage if and only if all machines have been in the $r$-th stage and the $i$-th machine has finished preparing necessary components for the next stage.
	\end{example}

Next, we provide a necessary and sufficient condition for a matrix to be pseudo-diagonalizable.
	
	\begin{theorem}\label{theorem3.5}
		Let $A=(a_{ij})\in\overline{\mathbb{R}}^{n\times n}$, then $A$ is pseudo-diagonalizable if and only if $A$ is finite and the equation \begin{flalign}\label{3.8}
			a_{ik}\otimes a_{kj}=\begin{cases}
				a_{ij} &i\neq j,\\
				0 &i=j
			\end{cases}
		\end{flalign}
		holds for any $i,j,k\in [n]$ with $k\neq i,j.$
	\end{theorem}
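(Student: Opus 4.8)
The plan is to prove both implications, using throughout Theorem~\ref{kelip} (the invertible matrices are exactly the generalized permutation matrices) and the entrywise conjugation formula~\eqref{xsyg}.

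For the forward implication I would start from $A=PDP^{-1}$ with $D={\rm pdiag}(d_1,\dots,d_n)$ and $P$ a generalized permutation matrix with permutation $\pi$, and set $p_i:=P_{i,\pi(i)}\in\mathbb{R}$. Since $\pi$ is a bijection, $i\neq j$ forces $\pi(i)\neq\pi(j)$, so $D_{\pi(i),\pi(j)}=0$ while $D_{\pi(i),\pi(i)}=d_{\pi(i)}$. Plugging this into~\eqref{xsyg} gives $a_{ii}=d_{\pi(i)}$ and $a_{ij}=p_ip_j^{-1}$ for $i\neq j$, so in particular $A$ is finite. Equation~\eqref{3.8} is then immediate: for $k\neq i,j$ one computes $a_{ik}a_{kj}=(p_ip_k^{-1})(p_kp_j^{-1})=p_ip_j^{-1}$, which equals $a_{ij}$ when $i\neq j$ and equals $0$ when $i=j$.

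For the converse I would assume $A$ is finite and~\eqref{3.8} holds, and construct an explicit diagonal conjugator. The decisive observation is that~\eqref{3.8} forces the off-diagonal entries of $A$ to be potential differences: the case $i=j$ gives the antisymmetry $a_{ik}a_{ki}=0$, i.e.\ $a_{ki}=a_{ik}^{-1}$, while the case $i\neq j$ gives the transitivity $a_{ik}a_{kj}=a_{ij}$. Fixing the reference index $n$, I would define $P={\rm diag}(p_1,\dots,p_n)$ by $p_i:=a_{in}$ for $i\neq n$ and $p_n:=0$, and put $B:=P^{-1}AP$, so that $B_{ij}=p_i^{-1}a_{ij}p_j$. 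The diagonal of $B$ coincides with the (real) diagonal of $A$, so it remains to verify $a_{ij}=p_ip_j^{-1}$ for all $i\neq j$. This splits into the case $j=n$ (true by the definition of $p_i$), the case $i=n$ (true by antisymmetry), and the case $i,j\neq n$ (true by transitivity with $k=n$ combined with antisymmetry). Hence $B$ is pseudo-diagonal and $A$ is pseudo-diagonalizable.

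The entrywise checks in both directions are routine. The one genuinely creative step, and the main obstacle, is the converse: recognizing that~\eqref{3.8} makes the off-diagonal part of $A$ exactly a potential difference $p_ip_j^{-1}$, so that a single diagonal conjugation annihilates all off-diagonal entries simultaneously. The subtle point is the \emph{global consistency} of the $p_i$, namely that one choice satisfies $a_{ij}=p_ip_j^{-1}$ for every $i\neq j$ at once; this is precisely what the combination of antisymmetry and transitivity supplies. I would also note the degenerate cases $n=1,2$, where~\eqref{3.8} is partly or wholly vacuous, but these are covered by the same construction.
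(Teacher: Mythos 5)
Your proof is correct and follows essentially the same route as the paper: both directions reduce to the observation that the off-diagonal entries must be potential differences $p_ip_j^{-1}$, and the converse is settled by conjugating with a diagonal matrix built from those potentials. The only (immaterial) difference is your choice of potentials, $p_i=a_{in}$ anchored at the reference index $n$, versus the paper's telescoping solution of the chain $x_{i}-x_{i+1}=a_{i+1,i}$; the two agree up to an additive constant and a sign convention.
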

	\begin{proof}
	Suppose that $A\in\overline{\mathbb{R}}^{n\times n}$ is a pseudo-diagonalizable matrix, by definition and Theorem \ref{kelip}, there exists a generalized permutation matrix $P$ with permutation $\pi$ such that $P^{-1}AP=D$, where $D={\rm pdiag}(d_1,\dots,d_n)$ for some $d_1,\ldots,d_n\in\mathbb{R}$. By \eqref{xsyg}, we obtain that $a_{ij}=P_{i,\pi(i)}D_{\pi(i),\pi(j)}(P_{j,\pi(j)})^{-1}$ for any $i,j\in[n]$.
		Set $p_i=P_{i,\pi(i)}$ for each $i\in[n]$, then
			\begin{flalign}\label{3.9}
			a_{ij}=\begin{cases}
				d_{\pi(i)} &i=j,\\
				p_ip_j^{-1} &i\neq j.
			\end{cases}
		\end{flalign}
Thus, $A$ is a finite matrix.
For any $k\in[n]$ with $k\neq i,j$, by the equation (\ref{3.9}), we obtain that
\begin{flalign*}
			a_{ik}a_{kj}=p_ip_k^{-1}p_kp_{j}^{-1}=p_ip_j^{-1}=\begin{cases}
				a_{ij} &i\neq j,\\
				0 &i=j.
			\end{cases}
		\end{flalign*}

		Conversely, suppose that $A=(a_{ij})\in\mathbb{R}^{n\times n}$ satisfies \eqref{3.8}. Let $(p_1,\ldots,p_n)^{\rm T}\in\mathbb{R}^n$ be a solution of the following system of linear equations in classical linear algebra
\begin{flalign*}
			\begin{cases}
				x_1-x_2=a_{21},\\
				x_2-x_3=a_{32},\\
				\dots \\
				x_{n-1}-x_n=a_{n,n-1}.\\
			\end{cases}
		\end{flalign*}
Let $P$ be the diagonal matrix ${\rm diag}(p_1,\ldots,p_n)$. By Theorem \ref{kelip}, $P$ is invertible. Set $D=PAP^{-1}$. According to \eqref{gpyx}, the permutation of $P$ is the identity permutation. Then by \eqref{xsyg}, for any $i,j\in[n]$ we have that
\begin{equation}\label{Dij}D_{ij}=p_ia_{ij}p_j^{-1}.\end{equation}
For $1\leq j<i\leq n$, by \eqref{3.8}, we have that
\begin{flalign*}
			a_{ij}&=a_{i,i-1}a_{i-1,j}\\
			&=\cdots\\
			&=a_{i,i-1}a_{i-1,i-2}\dots a_{j+1,j}\\
			&=p_i^{-1}p_{i-1}p_{i-1}^{-1}p_{i-2}\dots p_{j+1}^{-1}p_{j}\\
			&=p_i^{-1}p_j.
		\end{flalign*}
While for $1\leq i<j\leq n$, by \eqref{3.8}, we have that $a_{ij}=a_{ji}^{-1}=p_i^{-1}p_j$. Hence, by \eqref{Dij}, we obtain that $D_{ij}=0$ for any $1\leq i\neq j\leq n$.
Clearly, $D_{ii}=a_{ii}\in\mathbb{R}$. Therefore, $D$ is a pseudo-diagonal matrix.
	\end{proof}
	
	The following algorithm is used to verify whether a given matrix $A\in\mathbb{R}^{n\times n}$ is a pseudo-diagonalizable matrix.
	\begin{algorithm}\label{Algorithm3.4}\ \\
		Input: $A=(a_{ij})\in\mathbb{R}^{n\times n}.$\\
		Output: The indication of whether $A$ is a pseudo-diagonalizable matrix.\\
		Step $1$\\
		For all $i,j\in [n]$ with $i\textless j$ do
		
		begin
		
		$K_{ij}=a_{ij}a_{ji}$
		
		end\\
		If all $K_{ij}=0$, proceed to the next step. Otherwise, $A$ is not pseudo-diagonalizable.\\
		Step $2$\\
		For $i=1,2,\dots,n-2$ do
		
		for $j=i+2,i+3,\dots,n$ do
		
		begin
		
		$T_{ij}=a_{ij}a_{j,i+1}a_{i+1,i}$
		
		end\\
		If all $T_{ij}=0$, then $A$ is pseudo-diagonalizable. Otherwise, $A$ is not pseudo-diagonalizable.
	\end{algorithm}
	\begin{theorem}
		\rm{Algorithm} \rm{\ref{Algorithm3.4}} is correct and terminates after $O(n^2)$ operations.
	\end{theorem}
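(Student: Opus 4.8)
The plan is to prove correctness by reducing the two tests of the algorithm to the characterization of pseudo-diagonalizability in Theorem \ref{theorem3.5}, and then to bound the running time by a direct count of the loop iterations. Since the input is assumed to lie in $\mathbb{R}^{n\times n}$, the matrix $A$ is automatically finite, so by Theorem \ref{theorem3.5} it suffices to decide whether the system \eqref{3.8} holds. I would split \eqref{3.8} into its diagonal case, $a_{ik}a_{ki}=0$ for all $k\neq i$, and its off-diagonal case, $a_{ik}a_{kj}=a_{ij}$ for all distinct $i,j,k$, and then show that Step $1$ and Step $2$ test exactly these two families of conditions.

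For Step $1$, I would observe that $K_{ij}=a_{ij}\otimes a_{ji}$ is symmetric in $i$ and $j$, so requiring $K_{ij}=0$ for all $i<j$ is equivalent to requiring $a_{ik}a_{ki}=0$ for every ordered pair with $k\neq i$, which is precisely the diagonal case of \eqref{3.8}. In particular, once Step $1$ succeeds we have the antisymmetry relation $a_{ij}=a_{ji}^{-1}$ for all $i\neq j$, which I would carry into the analysis of Step $2$. Using antisymmetry, a short rewriting shows that $T_{ij}=a_{ij}\otimes a_{j,i+1}\otimes a_{i+1,i}=0$ is equivalent to the single instance $a_{ij}=a_{i,i+1}\otimes a_{i+1,j}$ of \eqref{3.8} with middle index $k=i+1$, a legitimate choice precisely because $j\geq i+2$ forces $i+1\neq i,j$.

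The main step, and the place where the economy of the algorithm must be justified, is to show that these neighbour instances together with antisymmetry already force the full off-diagonal case of \eqref{3.8}. I would prove by induction on $j-i$ that $a_{ij}=a_{i,i+1}\otimes a_{i+1,i+2}\otimes\cdots\otimes a_{j-1,j}$ for all $i<j$: the base case $j=i+1$ is trivial, and the inductive step applies $T_{ij}=0$ to peel off the factor $a_{i,i+1}$ and then invokes the inductive hypothesis on the shorter pair $(i+1,j)$. Setting $p_i=a_{1,2}\otimes\cdots\otimes a_{i-1,i}$ (so that $p_1=0$) then yields $a_{ij}=p_i^{-1}p_j$ for $i<j$, and antisymmetry extends this to all $i\neq j$; the telescoping identity $a_{ik}a_{kj}=p_i^{-1}p_kp_k^{-1}p_j=p_i^{-1}p_j=a_{ij}$ recovers \eqref{3.8} for every distinct triple. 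This shows that the algorithm reports ``pseudo-diagonalizable'' if and only if \eqref{3.8} holds, and the two failure branches are handled by the same equivalences read contrapositively (a single nonzero $K_{ij}$ or $T_{ij}$ already violates one instance of \eqref{3.8}), so correctness follows from Theorem \ref{theorem3.5}.

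Finally, for the complexity I would simply count iterations: Step $1$ runs over the $\binom{n}{2}$ pairs $i<j$, performing a constant number of $\otimes$-operations and one comparison with $0$ for each, while Step $2$ runs over the $\binom{n-1}{2}$ pairs $(i,j)$ with $i+2\leq j\leq n$, again at constant cost per pair. Both loops therefore cost $O(n^2)$ operations, and the algorithm halts upon their completion, giving the claimed bound. I expect the inductive telescoping argument to be the only nonroutine ingredient; the sign bookkeeping in the antisymmetric rewriting of $T_{ij}$ is the most error-prone piece and should be written out explicitly.
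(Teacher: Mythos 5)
Your proposal is correct and takes essentially the same route as the paper's proof: reduce to the characterization \eqref{3.8} of Theorem \ref{theorem3.5}, observe that Step $1$ is exactly the antisymmetry conditions $a_{ij}a_{ji}=0$, show that under antisymmetry $T_{ij}=0$ is the neighbour instance $a_{ij}=a_{i,i+1}\otimes a_{i+1,j}$, iterate to get the chain factorization of $a_{ij}$ for $i<j$, extend to all distinct triples, and count $\binom{n}{2}+\binom{n-1}{2}=O(n^2)$ iterations. The only difference is cosmetic: where the paper concludes with a three-way case analysis on the position of $k$ relative to $i$ and $j$, you package the same computation into the potentials $p_i$ and a single telescoping identity --- a device the paper itself employs in the converse direction of Theorem \ref{theorem3.5}.
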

	\begin{proof}
		Correctness: if $A$ is pseudo-diagonalizable, first of all, by the second equation in (\ref{3.8}), for any $1\leq i\neq j\leq n$ we have that
$$K_{ij}=a_{ij}a_{ji}=0.$$
Then for any $i,j$ with $1\leq i\neq j\neq i+1\leq n$, by the first equation in (\ref{3.8}), we have that
\begin{equation*}T_{ij}=a_{ij}a_{j,i+1}a_{i+1,i}=a_{ij}a_{ji}=0.\end{equation*}
In particular, for any $i,j\in[n]$ with $i\le n-2$ and $j\ge i+2$, we have that $T_{ij}=0$.
		
Conversely, suppose that $K_{ij}=0$ for any $1\leq i<j\leq n$ and $T_{ij}=0$ for any $i,j\in[n]$ with $i\le n-2$ and $j\ge i+2$.

Firstly, for any $1\leq i\neq k\leq n$, if $i<k$, then $K_{ik}=0$, i.e. $a_{ik}a_{ki}=0$; if $i>k$, then $a_{ki}a_{ik}=0$.
Thus, we get the second equation in (\ref{3.8}).

Secondly, let $i,j\in [n]$ with $i\neq j$, we need to prove $a_{ij}=a_{ik}a_{kj}$ for any $k\in[n]$ with $i\neq k\neq j$.
Without loss of generality, we assume $i<j$.

If $i<k<j$, then $i\le n-2$ and $j\ge i+2$, so
$T_{ij}=a_{ij}a_{j,i+1}a_{i+1,i}=0$. Thus, we have that
\begin{equation}\label{3.12}
a_{ij}=(a_{j,i+1}a_{i+1,i})^{-1}=a_{i,i+1}a_{i+1,j}.
\end{equation}
That is, for $k=i+1$ the first equation in (\ref{3.8}) holds.

If $i+1<k<j$, then by \eqref{3.12} we have that $a_{i+1,j}=a_{i+1,i+2}a_{i+2,j}$. Thus, again by \eqref{3.12},
\begin{flalign*}
		a_{ij}&=a_{i,i+1}a_{i+1,j}\\\
		&=a_{i,i+1}a_{i+1,i+2}a_{i+2,j}\\
		&=a_{i,i+2}a_{i+2,j}.
	\end{flalign*}
It means that for $k=i+2$ the first equation in (\ref{3.8}) holds.

Repeating this process, we obtain that $a_{ik}a_{kj}=a_{ij}$ for any $k=i+1,\ldots,j-1$.
Hence, we have proved that $a_{ij}=a_{ik}a_{kj}$ for $i\textless k\textless j$.

If $k<i<j$, then $a_{kj}=a_{ki}a_{ij}$, and thus $a_{ij}=a_{ki}^{-1}a_{kj}=a_{ik}a_{kj}$. If $i<j<k$, then $a_{ik}=a_{ij}a_{jk}$, and thus $a_{ij}=a_{ik}a_{jk}^{-1}=a_{ik}a_{kj}$.

Therefore, we complete the proof of (\ref{3.8}), and thus $A$ is pseudo-diagonalizable.

Complexity bound: the complexity of Step $1$ is $O(\frac{n(n-1)}{2})=O(n^2)$, and the complexity of Step $2$ is $O(\sum_{i=1}^{n-2}i)=O(\frac{(n-1)(n-2)}{2})=O(n^2)$. Thus, the complexity bound is $O(n^2)$.
	\end{proof}

\section{Pseudo-diagonal matrix powers and some special matrices}
In this section, we calculate the powers of pseudo-diagonal matrices, recall the definitions and properties of optimal-node matrices and separable matrices, prove the invariance of optimal-node matrices and separable matrices under similarity, and finally determine
optimal-node matrices and separable matrices in pseudo-diagonalizable matrices.
\subsection{Powers of pseudo-diagonal matrices}
The computation of matrix powers is an important problem in the max-plus algebra, which is close related to the calculation of eigenvalues and eigenvectors, as well as the problem of reachability of eigenspaces (c.f. \cite{butkovic2,butkovic4}). Jones \cite{daniel} provided explicit formulas for powers of $2\times 2$ finite matrices. A useful and well-known result on matrix powers is the Cyclicity Theorem. For any matrix $A\in\mathbb{R}^{n\times n}$, define the \emph{cyclicity} of $A$ as the greatest common divisor of the lengths of the critical cycles of $A$, and define the {\em period} of $A$ as the smallest positive integer $p$ for which there is a positive integer $T$ such that for any integer $k\ge T,$
\begin{equation*}
		A^{k+p}=(\lambda(A))^p\otimes A^k.
	\end{equation*}
	
\begin{theorem}\textup{(Cyclicity Theorem, \cite{butkovic2})}
	For any matrix $A\in\mathbb{R}^{n\times n}$, the
	cyclicity of $A$ is equal to the period of $A$.
\end{theorem}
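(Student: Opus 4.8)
The statement is the classical Cyclicity Theorem, and the plan is to reduce it to an asymptotic analysis of maximum-weight walks and then to a number-theoretic argument. First I would normalize: replacing $A$ by $A_\lambda=(\lambda(A))^{-1}\otimes A$ shifts every cycle mean by $-\lambda(A)$, so $\lambda(A_\lambda)=0$, the critical cycles and hence the cyclicity $\gamma$, the $\gcd$ of their lengths, are unchanged, and the defining identity $A^{k+p}=(\lambda(A))^{p}\otimes A^{k}$ becomes $A_\lambda^{k+p}=A_\lambda^{k}$. Thus it suffices to prove that, for a matrix with $\lambda=0$, the least $p$ with $A^{k+p}=A^{k}$ for all large $k$ equals $\gamma$.

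The key interpretation is that $(A^{k})_{ij}$ is the maximum weight of a walk of length $k$ from $i$ to $j$ in the weighted digraph of $A$. With $\lambda=0$ every cycle has nonpositive weight while every critical cycle has weight exactly $0$, so all entries of $A^{k}$ are bounded above uniformly in $k$. I would first treat the irreducible case, where the digraph is strongly connected. The heart of the argument is the asymptotic claim that for all sufficiently large $k$ a maximum-weight walk of length $k$ from $i$ to $j$ can be chosen to meet a critical node and to spend all but a bounded number of steps looping around critical cycles; this rests on the fact that each noncritical cycle has strictly negative weight, so it can be used only a bounded number of times before the walk weight drops below that of a competing walk that detours through the critical graph. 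Since inserting one full traversal of a critical cycle of length $\ell$ lengthens the walk by $\ell$ and changes its weight by $0$, this yields
\begin{equation*}
A^{k+\ell}=A^{k}\quad\text{for all large }k\text{ and every critical cycle length }\ell .
\end{equation*}

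From here the equality of cyclicity and period is number theory. On the one hand, for $k$ beyond the Frobenius number of the multiset of critical cycle lengths, every sufficiently large multiple of $\gamma$ is a nonnegative integer combination of those lengths, so iterating the displayed relation gives $A^{k+\gamma}=A^{k}$ for all large $k$; hence the period divides $\gamma$. On the other hand, no proper divisor of $\gamma$ can be a period: for a critical node $i$, the lengths of the weight-$0$ closed walks at $i$ form a numerical semigroup generated by the lengths of the critical cycles through $i$, which for large $k$ consists exactly of the multiples of $\gamma$, and since $A^{k+p}=A^{k}$ forces this set to be invariant under translation by $p$, we obtain $\gamma\mid p$. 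Combining the two divisibilities gives period $=\gamma$. Finally, the general (reducible) case follows by passing to the strongly connected components: the surviving, bounded entries of $A^{k}$ are governed by the components of the critical graph, and one combines the per-component conclusions, reading the period off the critical graph as a whole.

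The step I expect to be the main obstacle is the asymptotic concentration result in the irreducible case together with an explicit control of the transient length $T$ after which periodicity holds. Making precise that long maximum-weight walks are, up to a bounded correction, carried by critical cycles requires a careful weight comparison—conveniently organized through the transitive closure $\Gamma(A)$ and the strictly negative weight of noncritical cycles—and it is here that the argument becomes genuinely quantitative rather than purely combinatorial.
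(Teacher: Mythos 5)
The paper offers no proof of this statement---it is quoted from Butkovi\v{c}'s monograph---so there is nothing internal to compare you against; I judge your outline on its own. Your overall route (normalize to $\lambda=0$, show long optimal walks concentrate on the critical graph, finish with number theory) is the standard one, but there is a genuine gap at the displayed claim $A^{k+\ell}=A^{k}$ ``for all large $k$ and every critical cycle length $\ell$''. Inserting a traversal of a critical cycle $\sigma$ of length $\ell$ preserves the weight of an optimal walk from $i$ to $j$ only if that walk can be chosen to meet a node of $\sigma$; different entries $(i,j)$ are asymptotically carried by different strongly connected components of the critical graph, and a component containing no cycle of length $\ell$ gives you nothing. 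Concretely, for $A=\left(\begin{smallmatrix}-1&0&-10\\0&-1&-10\\-10&-10&0\end{smallmatrix}\right)$ one has $\lambda(A)=0$ and the critical cycles are $(1,2)$ and $(3)$, so the gcd of the critical cycle lengths is $1$; yet $(A^{k})_{11}$ equals $0$ for $k$ even and $-1$ for $k$ odd, so the period is $2$. Your displayed relation would assert $A^{k+1}=A^{k}$ for large $k$, which fails. (Note also that for $A\in\mathbb{R}^{n\times n}$ the digraph is complete, so your reduction to the irreducible case is vacuous rather than needed.)

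The failure is not cosmetic, because your own lower bound already contradicts your upper bound. The numerical-semigroup argument at a critical node $i$ correctly shows that the gcd of the cycle lengths in the strongly connected \emph{component} of $i$ in the critical graph divides the period, hence the lcm of these per-component gcds divides $p$; a correct version of your concentration argument, run separately for each entry and each component, shows $p$ divides that same lcm. What your method actually proves is therefore ``period $=$ lcm over the strongly connected components of the critical graph of the gcd of the cycle lengths within each component'', which is exactly how Butkovi\v{c} defines cyclicity. The paraphrase ``gcd of the lengths of the critical cycles'' used in the statement coincides with this for the pseudo-diagonal matrices treated in this paper (where either the critical graph is strongly connected or every component has cyclicity $1$), but not in general, and no proof can close the gap for the statement as literally written because that statement is false for matrices like the example above. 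To repair the write-up: prove per-entry, per-component eventual periodicity (this is where the quantitative transient estimate you flag genuinely lives), take the lcm, and then observe separately that for the matrices relevant here the two notions of cyclicity agree.
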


\begin{lemma}\label{4.17}
	Let $D={\rm pdiag}(d_1,\dots,d_n)$ be a pseudo-diagonal matrix and $d=max\{d_1,\dots,d_n\}$. Then we have the following:
	
	$(1)$ $\lambda(D)=d\oplus0$.
	
	$(2)$ If $d\textgreater0$, then $i$ is a critical node of $D\Longleftrightarrow d_i=d.$
	
	$(3)$ If $d\textgreater0$, then $N^*_c(D)=N_c(D).$
\end{lemma}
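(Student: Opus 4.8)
The plan is to reduce all three parts to one elementary computation of the cycle means of $D$. Since every off-diagonal entry of $D$ equals $0$ while its $(i,i)$-entry is $d_i$, a loop $(i)$ has weight $\omega((i),D)=d_i$ and hence mean $d_i$, whereas any $\sigma=(i_1,\dots,i_k)\in\mathcal{P}_n$ of length $k\ge 2$ traverses only off-diagonal entries, so $\omega(\sigma,D)=0\otimes\cdots\otimes 0=0$ and $\mu(\sigma,D)=0$. Thus the cycle means of $D$ form exactly the set $\{d_1,\dots,d_n\}\cup\{0\}$, the value $0$ occurring via any length-two cycle once $n\ge 2$.

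Part $(1)$ is then immediate: $\lambda(D)=\max_{\sigma\in\mathcal{P}_n}\mu(\sigma,D)=\max\{\max_i d_i,\,0\}=d\oplus 0$. For part $(2)$ I assume $d>0$, so part $(1)$ gives $\lambda(D)=d$. Every cycle of length $\ge 2$ has mean $0<d$ and is thus non-critical, while a loop $(i)$ is critical exactly when $d_i=d$. Hence the critical cycles are precisely the loops at the maximizing nodes, and a node lies on a critical cycle if and only if $d_i=d$; that is, $i\in N_c(D)\iff d_i=d$.

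For part $(3)$ I would use part $(2)$ to record $N_c(D)=\{i:d_i=d\}$ and then match $N^*_c(D)$ to the same set. Because $d>0$ forces $\lambda(D)=d$, the normalized matrix $D_\lambda$ has diagonal entries $d_i-d\le 0$ and off-diagonal entries $-d<0$; the membership of $i$ in $N^*_c(D)$ is controlled by whether the $(i,i)$-entry of the transitive closure $\Gamma(D_\lambda)$ vanishes (equivalently by whether $i$ carries a critical loop $D_{ii}=\lambda(D)$). Showing this condition reduces to $d_i=d$ then yields $N^*_c(D)=\{i:d_i=d\}=N_c(D)$.

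The main obstacle is precisely this last reduction, and it is genuinely an obstacle only through the hypothesis $d>0$. I would verify that no closed walk through a node with $d_i<d$ can attain normalized weight $0$: such a walk uses either the loop at that node, contributing $d_i-d<0$, or at least one off-diagonal edge of $D_\lambda$, contributing $-d<0$, while all remaining diagonal contributions are $\le 0$; hence its normalized weight is strictly negative, so $(\Gamma(D_\lambda))_{ii}<0$ and $i\notin N^*_c(D)$. It is exactly this strict negativity of the off-diagonal weights that fails when $d\le 0$ (where $\lambda(D)=0$ and the length-$\ge 2$ cycles themselves become critical), which is why the hypothesis $d>0$ is imposed in $(2)$ and $(3)$.
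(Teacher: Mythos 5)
Parts (1) and (2) are correct and follow the paper's proof essentially verbatim: everything rests on the observation that a loop $(i)$ has mean $d_i$ while every cycle of length at least two has weight $0$ and hence mean $0$, so $\lambda(D)=d\oplus 0$ and, when $d>0$, the critical cycles are exactly the loops at the maximizing indices. (Both you and the paper tacitly need $n\ge 2$ for the value $0$ to be attained in (1); you at least flag this.)

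The problem is part (3), and it stems from the symbol $N^*_c(D)$, which the paper never defines. You read $N^*_c(D)$ as the set of nodes $i$ with $(\Gamma(D_\lambda))_{ii}=0$. But that set equals $N_c(D)$ for \emph{every} finite matrix by the standard characterization of critical nodes, so under your reading (3) is content-free and your argument merely re-derives (2) through the transitive closure. From how $N^*_c$ is used later --- Proposition \ref{prop4.26} deduces $d(D)=|N_c(D)|$ from $N^*_c(D)=N_c(D)$ via Theorem \ref{eigenspace2.4}, which produces one basis vector $g_{\bar i}$ per equivalence class --- the intended meaning is a transversal of $N_c(D)/\sim$, and (3) asserts that the $\sim$-classes of critical nodes are singletons, i.e.\ no two distinct critical nodes lie on a common critical cycle. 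That is what the paper proves: since $d>0$ forces every critical cycle to be a loop, $i\sim j$ for critical $i,j$ implies $i=j$. Your own analysis in (2) already shows that all critical cycles are loops, so the intended statement follows in one line from what you wrote; but as submitted, your part (3) proves a different (and trivially true) assertion rather than the one the paper needs downstream.
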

\begin{proof}
For any cycle $(i)$ of length one with $i\in[n]$, $\mu((i),D)=D_{ii}=d_i$.
While for any cycle $\sigma=(i_1,\dots,i_l)$ with $l\ge2$, we have that
	\begin{equation*}
		\omega(\sigma,D)= {D_{i_1i_2}+D_{i_2i_3}+\dots +D_{i_{l-1}i_l}+D_{i_li_1}}=0,	
	\end{equation*}
and then $\mu(\sigma,D)=0$. Thus, $\lambda(D)=d\oplus0.$ This finishes the proof of $(1)$.
	
If $d>0$, $\lambda(D)=d\oplus0=d.$ Since for any cycle $\sigma$ with $l(\sigma)\geq 2$, $\mu(\sigma,D)=0\neq d$, the critical nodes only appear in the cycles of length one. So for each $i\in [n]$, $i$ is a critical node of $D$ if and only if $d_i=d$, and for any critical nodes $i,j$, we have that $i\sim j\Longleftrightarrow i=j$. Hence, we finish the proofs of $(2)$ and $(3)$.
\end{proof}
\begin{lemma}\label{corollary cyclicity}
	Let $D={\rm pdiag}(d_1,\dots,d_n)$ be a pseudo-diagonal matrix. If $n=2$ and $d_1\oplus d_2<0$, then the cyclicity of $D$ is $2$. Otherwise, the cyclicity of $D$ is $1$.
\end{lemma}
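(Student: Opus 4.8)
The plan is to determine the lengths of all critical cycles of $D$ and take their greatest common divisor, relying on the mean computations already recorded in Lemma~\ref{4.17}. If $n=1$ the only cycle is the loop $(1)$, which is automatically critical, so the cyclicity is trivially $1$; I may therefore assume $n\geq2$. Write $d=\max\{d_1,\dots,d_n\}$. As computed in the proof of Lemma~\ref{4.17}, a loop $(i)$ has mean $\mu((i),D)=d_i$, every cycle $\sigma$ with $l(\sigma)\geq2$ has mean $\mu(\sigma,D)=0$, and $\lambda(D)=d\oplus0$. Since a cycle is critical exactly when its mean equals $\lambda(D)$, the identification of the critical cycles comes down to comparing $d$ with $0$.

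When $d>0$ we have $\lambda(D)=d$, so no cycle of length $\geq2$ is critical (its mean is $0<d$), and the critical cycles are precisely the loops $(i)$ with $d_i=d$. These all have length one, so the cyclicity is $1$.

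When $d=0$ we have $\lambda(D)=0$. Some index $i$ satisfies $d_i=0$, giving a critical loop $(i)$ of length one, while (because $n\geq2$) the two-cycle $(1,2)$ is a critical cycle of length two. Thus the set of critical-cycle lengths contains both $1$ and $2$, and the cyclicity is again $1$.

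The only remaining, and only genuinely delicate, case is $d<0$, where $\lambda(D)=0$ but every loop $(i)$ has mean $d_i<0$ and hence fails to be critical; the critical cycles are then exactly the cycles of length $\geq2$. This is where the value of $n$ matters, and it is the step I expect to require the most care: for $n=2$ the only cycles of length $\geq2$ are the two-cycles, so all critical cycles have length $2$ and the cyclicity is $2$; for $n\geq3$ one can exhibit critical cycles of length $2$ and of length $3$ (say $(1,2)$ and $(1,2,3)$), whose lengths are coprime, forcing the cyclicity to be $1$. Combining the cases, the cyclicity equals $2$ precisely when $n=2$ and $d_1\oplus d_2<0$, and equals $1$ otherwise, as claimed. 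No deeper obstacle is expected, since every mean value needed is already furnished by Lemma~\ref{4.17}.
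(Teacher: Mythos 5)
Your proposal is correct and follows exactly the route the paper intends: the paper's own proof is just the one-line remark that the claim follows from the definitions and Lemma~\ref{4.17}, and your case analysis on $d=\max\{d_1,\dots,d_n\}$ versus $0$ (together with the separate treatment of $n=1$ and the coprime lengths $2$ and $3$ when $n\geq3$ and $d<0$) is precisely the omitted verification.
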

\begin{proof}
It is easily proved by definitions and Lemma \ref{4.17}.
\end{proof}

In order to illustrate the applications of matrix powers in the problem of reachability of eigenspaces, let us recall the system in Example \ref{example MMIPP}
\begin{equation}\label{xitong}
	x(r+1)=A\otimes x(r),~~ r\geq 0.
\end{equation}
We say that the system \eqref{xitong} reaches a \emph{steady state regime} if it eventually moves forward in regular steps, that is, there exists some integer $k\geq0$ such that $$A^{k+1}\otimes x(0)=\lambda(A)\otimes A^k\otimes x(0).$$
For any $A\in\mathbb{R}^{n\times n}$, let ${\rm attr}(A)$ be the set of all starting vectors $x$ from which the sequence $\{A^kx\}_{k=0}^\infty$ reaches an eigenvector of $A$. Namely,
\begin{equation*}
	{\rm attr}(A)=\{x\in\overline{\mathbb{R}}^n: A^kx\in V(A)~\text{for~some}~k\geq0\}.
\end{equation*}
The matrix $A\in\mathbb{R}^{n\times n}$ is called \emph{strongly stable}, if ${\rm attr}(A)=\overline{\mathbb{R}}^n$; called \emph{weakly stable}, if ${\rm attr}(A)=V(A)$~(c.f.\cite{butkovic4}).

In what follows, let us provide some calculations on powers of pseudo-diagonal matrices.
\begin{proposition}\label{power}
Let $D={\rm pdiag}(d_1,\dots,d_n)$ be a pseudo-diagonal matrix with $d_1\le\dots d_s\le0\le d_{s+1}\le\dots\le d_n$, where $n\ge3$ and $1\le s\textless n$. Then for any integer $k\ge2$,
	\begin{equation}\label{4.1}
		(D^k)_{ij}=\begin{cases}
			d_n^{k-2}\quad &i\oplus j\le s,\\
			d^k_i\oplus d^{k-2}_n&i=j\textgreater s,\\
			d^{k-1}_{i\oplus j}\oplus d^{k-2}_n&i\ne j,i\oplus j\textgreater s.
		\end{cases}
	\end{equation}
\end{proposition}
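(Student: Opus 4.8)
The plan is to reinterpret the entry $(D^k)_{ij}$ as a longest-walk problem on a weighted digraph, and then optimize by counting how many steps of a walk are ``moves'' versus ``loops.'' Since every entry of $D$ is finite, expanding the $\otimes$-product gives
\[
(D^k)_{ij}=\max_{\,i=v_0,v_1,\dots,v_k=j}\ \sum_{t=0}^{k-1}D_{v_tv_{t+1}},
\]
the maximum over all walks of length $k$ from $i$ to $j$. Because $D_{aa}=d_a$ while $D_{ab}=0$ for $a\neq b$, a step $v_t\to v_{t+1}$ contributes $d_{v_t}$ when $v_{t+1}=v_t$ (a \emph{loop}) and $0$ otherwise (a \emph{move}); hence the weight of a walk is exactly the sum of $d_m$ over its loop steps. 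Two preliminary observations fix the landscape: the hypothesis makes $d_1\le\cdots\le d_n$ nondecreasing, so $d_{i\oplus j}=\max(d_i,d_j)$ and $d_n=\max_m d_m$; and $s<n$ forces $d_n\ge d_{s+1}\ge0$.

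Next I would record the two elementary facts that drive everything. \textbf{(a)} Each loop contributes at most $d_n$, so a walk with $r$ move steps has weight at most $(k-r)d_n$; since $d_n\ge0$, any walk with $r\ge2$ has weight $\le(k-2)d_n$, and this value is realized by the walk $i\to n\to\cdots\to n\to j$ that parks at the top node. \textbf{(b)} A move-counting dichotomy: a closed walk ($i=j$) cannot have exactly one move step, since a single change of node cannot return to the start, so its move count is $0$ or $\ge2$; an open walk ($i\neq j$) has at least one move, and one with exactly one move must loop $t$ times at $i$, make a single move, then loop $k-1-t$ times at $j$, contributing $t\,d_i+(k-1-t)\,d_j\le(k-1)\,d_{i\oplus j}$.

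With these in hand, each of the three cases follows by matching a concrete optimal walk against the bounds. When $i\oplus j\le s$, both endpoints satisfy $d_i,d_j\le0$, so every walk with $r\le1$ has nonpositive weight while the $r\ge2$ optimum $(k-2)d_n\ge0$ is attained through node $n$; thus $(D^k)_{ij}=d_n^{k-2}$. When $i=j>s$, the $0$-move walk (all loops at $i$) gives $d_i^{k}$ and the $\ge2$-move optimum gives $d_n^{k-2}$, and the dichotomy in (b) shows these are the only competitors, yielding $d_i^{k}\oplus d_n^{k-2}$. When $i\neq j$ with $i\oplus j>s$, the one-move optimum is $d_{i\oplus j}^{k-1}$ and the $\ge2$-move optimum is $d_n^{k-2}$, giving $d_{i\oplus j}^{k-1}\oplus d_n^{k-2}$; in particular this already reproduces the base case $k=2$.

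I expect the crux to be the \emph{tightness} of the upper bounds rather than the constructions: justifying the move-counting dichotomy cleanly and confirming that the extremal walks stay valid in the boundary situations, namely $k=2$ (where the route through $n$ degenerates to $0$ loops) and the cases where an endpoint coincides with $n$ (where one of the two competing strategies collapses into the other, so that $(k-1)d_n$ or $k\,d_n$ dominates $(k-2)d_n$). A mechanical alternative is induction on $k$ via $D^k=D\otimes D^{k-1}$, which reduces $(D^k)_{ij}$ to $\max\!\big(d_i+(D^{k-1})_{ij},\ \max_{m\neq i}(D^{k-1})_{mj}\big)$; this avoids the walk combinatorics but requires reading off the column maxima of $D^{k-1}$ from the inductive formula, so I would keep the walk argument as the conceptual backbone and use induction only to cross-check the edge cases.
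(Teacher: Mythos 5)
Your argument is correct, but it takes a genuinely different route from the paper. The paper proves Proposition \ref{power} by induction on $k$: it computes $D^2$ directly from the definition and then, assuming the formula for $D^k$, evaluates $(D^{k+1})_{ij}=\max_t\{D_{it}+(D^k)_{tj}\}$ through a multi-case verification (splitting on $i\oplus j\le s$, $i=j>s$, and $i\ne j$ with $j>s$ or $j\le s$). You instead read $(D^k)_{ij}$ as the maximum weight of a length-$k$ walk from $i$ to $j$ and classify walks by their number of move steps $r$: the weight is the sum of $d_m$ over loop steps, so $r=0$ gives $d_i^{k}$ (only for $i=j$), $r=1$ gives at most $d_{i\oplus j}^{k-1}$ (only for $i\ne j$), and $r\ge2$ gives at most $d_n^{k-2}$, attained by parking at node $n$ (using $n\ge3$ and $d_n\ge0$); the three cases of \eqref{4.1} then fall out as the surviving competitors. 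The boundary situations you flag ($k=2$, or an endpoint equal to $n$) do resolve exactly as you say, since $d_n^{k-1}$ and $d_n^{k}$ absorb $d_n^{k-2}$. Your approach is more conceptual: it explains in one stroke why the answer stratifies by move count, delivers all $k$ simultaneously without threading a formula through an induction, and immediately yields the unordered variant stated in the paper's subsequent remark (where the cases are governed by the signs of $d_i$, $d_j$ rather than by the indices). The paper's induction is more mechanical but entirely self-contained and doubles as the computational template reused for the $2\times2$ case and for Corollaries \ref{positivedn} and \ref{negativedn}. If you write yours up, the only points needing full care are the ones you already identified: the parity-free dichotomy that a closed walk cannot have exactly one move, and the existence of a valid intermediate node for the $r\ge2$ construction when $k=2$ and $n\in\{i,j\}$ (which is where $n\ge3$ is used).
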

\begin{proof}
Let's prove by induction on $k$.	
For $k=2$, we have that
\begin{equation}\label{dfang}
(D^2)_{ij}=\sum_{t=1}^{n}D_{it}\otimes D_{tj}=\mathop{max}_{t\in [n]}\{D_{it}+ D_{tj}\},\end{equation}
which is equal to $\mathop{max}\{0,2d_i\}$ for $i=j$, and $\mathop{max}\{0,d_i,d_j\}$ for $i\neq j$.	
If $i\oplus j\le s$, then $d_i,d_j\leq0$.  Thus, $(D^2)_{ij}=0$.	
If $i=j\textgreater s$, then $d_i\geq0$. Thus, $(D^2)_{ij}=2d_i=d_i^2=d_i^2\oplus0$.	
If $i\neq j$ and $i\oplus j\textgreater s$, then $d_{i\oplus j}\geq0$ and $d_i,d_j\leq d_{i\oplus j}$. Thus, $(D^2)_{ij}=d_{i\oplus j}=d_{i\oplus j}\oplus0$.

Suppose that the equation (\ref{4.1}) holds for some $k\geq2$, then
	\begin{equation}\label{k+1}
		(D^{k+1})_{ij}=\sum_{t=1}^{n}D_{it}\otimes(D^k)_{tj}=\mathop{max}_{t\in [n]}\{D_{it}+ (D^k)_{tj}\}.
	\end{equation}
	
If $i\oplus j\le s$, then $d_i\leq0$ and $d_i\leq d_n$. By the induction hypothesis, we have that
\begin{equation*}
(D^k)_{tj}=\begin{cases}
d_n^{k-2}\quad &t\leq s,\\
d_t^{k-1}\oplus d^{k-2}_n&t>s.
\end{cases}
\end{equation*}
Hence, $(D^{k+1})_{ij}=\mathop{max}\{d_i+d_n^{k-2},d_{s+1}^{k-1}\oplus d_n^{k-2},\ldots,d_n^{k-1}\oplus d_n^{k-2}\}=d_n^{k-1}$.
	
If $i=j\textgreater s$, by the induction hypothesis, we have that
\begin{equation*}
(D^k)_{ti}=\begin{cases}
d_i^{k-1}\oplus d_n^{k-2}\quad &t\leq s,\\
d^k_i\oplus d^{k-2}_n&i=t>s,\\
d^{k-1}_{i\oplus t}\oplus d^{k-2}_n&i\neq t>s.
\end{cases}
\end{equation*}
Hence, $(D^{k+1})_{ij}=\mathop{max}\{d_i^{k-1}\oplus d_n^{k-2},d_i+(d^k_i\oplus d^{k-2}_n),d^{k-1}_{n}\oplus d^{k-2}_n\}=d_i^{k+1}\oplus d_n^{k-1}$.

If $i\neq j$ and $i\oplus j\textgreater s$, let us discuss the following two cases:	
For $j\textgreater s$, by the induction hypothesis, we have that
\begin{equation*}
		(D^k)_{tj}=\begin{cases}
		d^{k-1}_j\oplus d^{k-2}_n &t\le s,\\
		d^k_j\oplus d^{k-2}_n &j=t\textgreater s,\\
		d^{k-1}_{t\oplus j}\oplus d^{k-2}_n &j\neq t\textgreater s.
		\end{cases}
	\end{equation*}
	Hence, we obtain that
\begin{flalign*}(D^{k+1})_{ij}&=\mathop{max}\{d^{k-1}_j\oplus d^{k-2}_n,d^k_j\oplus d^{k-2}_n,d_i+(d^{k-1}_{i\oplus j}\oplus d^{k-2}_n),d^{k-1}_n\oplus d^{k-2}_n\}\\&=d^k_j\oplus d^{k-1}_n.\end{flalign*}	
For $j\le s,$ we have that $i\textgreater s.$ By the induction hypothesis, we obtain that  \begin{equation*}
		(D^k)_{tj}=\begin{cases}
		 d^{k-2}_n &t\le s,\\
			d^{k-1}_t\oplus d^{k-2}_n &t\textgreater s.
		\end{cases}
	\end{equation*}
Hence, $(D^{k+1})_{ij}=max\{d^{k-2}_n,d^{k-1}_n\oplus d^{k-2}_n,d_i+(d^{k-1}_i\oplus d^{k-2}_n)\}=d^k_i\oplus d^{k-1}_n.$
Therefore, by induction, we complete the proof.
\end{proof}

\begin{remark}
For a general pseudo-diagonal matrix $D={\rm pdiag}(d_1,\dots,d_n)$ whose diagonal entries may not be arranged in ascending order, assume that $n\geq3$ and $d=max\{d_1,\dots,d_n\}\geq 0$, by using similar arguments for proving Proposition \ref{power}, we obtain \begin{equation*}
		(D^k)_{ij}=\begin{cases}
			d^{k-2}\quad &d_i\oplus d_j\le 0,\\
			d^k_{i}\oplus d^{k-2}&i=j,d_i\textgreater 0,\\
			d^{k-1}_{i}\oplus d^{k-1}_{j}\oplus d^{k-2}&i\ne j,d_i\oplus d_j\textgreater 0.
		\end{cases}
	\end{equation*}
For convenience, in what follows we will always assume that the diagonal entries of pseudo-diagonal matrices are arranged in ascending order.
\end{remark}

\begin{corollary}\label{positivedn}
Let $D={\rm pdiag}(d_1,\dots,d_n)$ be a pseudo-diagonal matrix with $0\le d_1\le\dots\le d_n$ and $n\ge3$. Then for any integer $k\ge2$, we have that
	\begin{equation*}
		(D^k)_{ij}=\begin{cases}
			d^k_i\oplus d^{k-2}_n &i=j,\\
			d^{k-1}_{i\oplus j}\oplus d^{k-2}_n&i\ne j.
		\end{cases}
	\end{equation*}
\end{corollary}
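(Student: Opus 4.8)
The plan is to read the corollary off from Proposition \ref{power} by regarding it as the boundary case in which no diagonal entry is negative. Write $s$ for the number of indices $i$ with $d_i=0$, so that $d_1=\cdots=d_s=0<d_{s+1}\le\cdots\le d_n$ (with the conventions that $s=0$ when $d_1>0$ and $s=n$ when $d_n=0$). The ascending arrangement $0\le d_1\le\cdots\le d_n$ means precisely that the negative block of Proposition \ref{power} is empty, so the corollary's two-case formula should emerge from the proposition's three-case formula once the first case is absorbed into the other two.

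Assume first that $1\le s<n$, so that there is at least one vanishing and at least one strictly positive diagonal entry. Then Proposition \ref{power} applies verbatim and gives the three values $d_n^{k-2}$ (for $i\oplus j\le s$), $d_i^k\oplus d_n^{k-2}$ (for $i=j>s$), and $d_{i\oplus j}^{k-1}\oplus d_n^{k-2}$ (for $i\neq j$, $i\oplus j>s$). I would now simplify. For any index $t\le s$ we have $d_t=0$, hence $d_t^m=0$ for every $m\ge0$, and since $d_n\ge0$ we have $d_n^{k-2}\ge0$ and therefore $0\oplus d_n^{k-2}=d_n^{k-2}$. Consequently, when $i\oplus j\le s$ the value $d_n^{k-2}$ equals $d_i^k\oplus d_n^{k-2}$ if $i=j$ and equals $d_{i\oplus j}^{k-1}\oplus d_n^{k-2}$ if $i\neq j$; together with the two remaining cases this is exactly the asserted formula for all $i,j$.

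It remains to treat the two boundary values excluded by the hypothesis $1\le s<n$ of Proposition \ref{power}. If $s=n$, i.e. $d_1=\cdots=d_n=0$, then $D$ is the all-zero matrix, every power $D^k$ is again the all-zero matrix, and both branches of the claimed formula return $0$, so the identity holds trivially. If $s=0$, i.e. $0<d_1\le\cdots\le d_n$, the proposition is not directly applicable, but its proof carries over unchanged: the induction on $k$ in Proposition \ref{power} never uses the presence of a non-positive entry except to produce the now-vacuous case $i\oplus j\le s$, so repeating that induction with only the two surviving cases $i=j$ and $i\neq j$ yields the formula. I expect this last, all-positive situation to be the only genuine obstacle, since it is the one place where one cannot merely cite Proposition \ref{power}; however, the required computation is identical to the one already carried out there (the base step $k=2$ gives $(D^2)_{ii}=2d_i=d_i^2$ and $(D^2)_{ij}=d_{i\oplus j}$, and the inductive step uses $d_i+(k-2)d_n\le(k-1)d_n$ to collapse the maximum), so no new idea is needed.
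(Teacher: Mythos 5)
Your proof is correct and follows essentially the same route as the paper, which simply observes that the argument of Proposition \ref{power} goes through with $s=0$ (the negative block empty). Your extra case split on the number of vanishing diagonal entries is harmless but unnecessary, since running the induction of Proposition \ref{power} with $s=0$ already covers all of $0\le d_1\le\dots\le d_n$ uniformly, including the all-zero and all-positive situations.
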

\begin{proof}
It is obtained by taking $s=0$ in the proof of Proposition \ref{power}.
\end{proof}
\begin{corollary}\label{negativedn}
Let $D={\rm pdiag}(d_1,\dots,d_n)$ be a pseudo-diagonal matrix with $d_1\le\dots\le d_n\le0$ and $n\ge3$. Then $D^k=0$ for any integer $k\ge2.$
\end{corollary}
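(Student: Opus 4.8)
The plan is to argue directly rather than to read this off Proposition \ref{power}. It is worth noting at the outset that one cannot simply set $s=n$ in the formula \eqref{4.1}: that substitution would force every pair $(i,j)$ into the first case and yield $(D^k)_{ij}=d_n^{k-2}$, which is false here whenever $d_n<0$. The reason is that the proof of Proposition \ref{power} exploited $d_n\ge 0$ so that the diagonal contributions dominate the off-diagonal zeros, whereas now every $d_i\le 0$ and it is the off-diagonal zeros that dominate. So I would instead establish the base case $D^2=0$ by a direct computation and then propagate it to all $k\ge 2$ by a one-line induction.

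For the base case I would use $(D^2)_{ij}=\max_{t\in[n]}\{D_{it}+D_{tj}\}$. If $i\ne j$, then since $n\ge 3$ I may choose an index $t\notin\{i,j\}$, for which $D_{it}=D_{tj}=0$ and hence $D_{it}+D_{tj}=0$; every other choice of $t$ contributes $d_i\le 0$, $d_j\le 0$, or $0$, so the maximum is exactly $0$. If $i=j$, then choosing any $t\ne i$ again gives $D_{it}+D_{ti}=0$, while the only remaining term, $t=i$, equals $2d_i\le 0$; so the maximum is again $0$. Thus $D^2$ is the all-zeros matrix $0$.

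For the inductive step the key observation is that the all-zeros matrix $0$ is absorbing under left multiplication by $D$: for all $i,j$ we have $(D\otimes 0)_{ij}=\max_{t\in[n]}\{D_{it}+0\}=\max_{t\in[n]}D_{it}=0$, because each row of $D$ has its diagonal entry $d_i\le 0$ and all of its (at least one, as $n\ge 2$) off-diagonal entries equal to $0$. Hence, assuming $D^k=0$ for some $k\ge 2$, we get $D^{k+1}=D\otimes D^k=D\otimes 0=0$, and the claim follows by induction.

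I do not expect a genuine obstacle here; the only thing to be careful about is the point flagged in the first paragraph, namely that Corollary \ref{negativedn} is \emph{not} the $s=n$ specialization of Proposition \ref{power}, so the short self-contained computation above is the right route. Everything else is an immediate max-plus calculation that uses only $n\ge 3$ and $d_i\le 0$.
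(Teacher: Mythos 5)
Your proof is correct and follows essentially the same route as the paper: the paper likewise establishes $D^2=0$ directly from the formula $(D^2)_{ij}=\max_{t}\{D_{it}+D_{tj}\}$ and then propagates to all $k\ge 2$ via the recurrence for $(D^{k+1})_{ij}$. Your added observation that the statement is not the $s=n$ specialization of Proposition \ref{power} is accurate, and your explicit use of $n\ge 3$ to pick $t\notin\{i,j\}$ in the off-diagonal case makes the base step fully rigorous.
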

\begin{proof}
By \eqref{dfang}, we obtain $D^2=0$. Then using \eqref{k+1}, we get $D^k=0$ for any $k\geq2$.
\end{proof}

Finally, for completeness, we provide calculations on powers of pseudo-diagonal
matrices of $2\times2$ type.
\begin{proposition}\label{2*2}
	Let $D={\rm pdiag}(d_1,d_2)$ be a pseudo-diagonal matrix with $d_1\le d_2$ and let $k\ge2$.

$(1)$~If $d_1\le d_2\le0$, then
$$(D^k)_{ij}=\begin{cases}
				0 &i+j+k\equiv0~({\rm mod}~2),\\
				d_2 &i+j+k\equiv1~({\rm mod}~2).
			\end{cases}$$

$(2)$~If $d_1\le0\le d_2$, then
$$(D^k)_{ij}=\begin{cases}
d_2^{k-2}&i=j=1,\\
d_2^{k-1}&i\neq j,\\
d_2^k&i=j=2.
\end{cases}$$

$(3)$~If $0\le d_1\le d_2$, then
$$(D^k)_{ij}=\begin{cases}
				d_i^k\oplus d_2^{k-2}&i=j,\\
				d_2^{k-1}&i\neq j.
			\end{cases}$$
\end{proposition}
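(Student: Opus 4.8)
The plan is to prove each of the three cases by induction on $k$, exactly parallel to the proof of Proposition \ref{power}, but now for the $2\times2$ matrix that proposition explicitly excludes (it assumed $n\ge3$). The crucial structural difference is that with $n=2$ there is no index $t$ distinct from both $i$ and $j$, so the ``extra'' $0$-term that appears in the off-diagonal entries when $n\ge3$ is absent here; this is precisely what allows an off-diagonal entry to equal $d_2$ even when $d_2<0$. Concretely, the base case $k=2$ from \eqref{dfang} gives $(D^2)_{ii}=\max\{0,2d_i\}$ on the diagonal and $(D^2)_{ij}=\max\{d_1,d_2\}=d_2$ off the diagonal, using $d_1\le d_2$; matching this against the sign of $d_1,d_2$ in each regime establishes the three base cases. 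Since $D$ is symmetric, every power $D^k$ is symmetric, so in the inductive step—using \eqref{k+1} in the form $(D^{k+1})_{ij}=\max_{t\in[2]}\{D_{it}+(D^k)_{tj}\}$—I only need to compute the three entries $(D^k)_{11}$, $(D^k)_{22}$ and $(D^k)_{12}=(D^k)_{21}$.

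For cases $(2)$ and $(3)$, where $0$ lies to the left of $d_2$, the entries of $D^k$ are monotone powers of $d_2$, with a correction term $d_i^k$ on the diagonal in case $(3)$, and the inductive step reduces to a short list of comparisons among $(k\pm1)d_2$, $d_1+(k-2)d_2$, and $(k+1)d_i$. Each comparison is settled by $d_1\le d_2$ together with the sign condition defining the case: for instance $d_1+(k-2)d_2\le(k-1)d_2$ follows from $d_1\le d_2$, while $(k-2)d_2\le(k-1)d_2$ follows from $d_2\ge0$. I expect these two cases to present no real difficulty.

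The genuine obstacle is case $(1)$, where $d_1\le d_2\le0$ and the powers exhibit the parity phenomenon: $D^k$ alternates between the two matrices $\bigl(\begin{smallmatrix}0&d_2\\ d_2&0\end{smallmatrix}\bigr)$ and $\bigl(\begin{smallmatrix}d_2&0\\ 0&d_2\end{smallmatrix}\bigr)$ according to the parity of $k$, which is exactly the cyclicity-$2$ behaviour recorded in Lemma \ref{corollary cyclicity}. Here the inductive step must be split according to whether $k$ is even or odd, and for each of the three entries one tracks how the parity of $i+j+k$ flips when passing from $k$ to $k+1$. The point making the maxima collapse correctly is the pair of inequalities $d_1+d_2\le0$ and $2d_2\le0$: these force the diagonal contributions $d_1+d_2$ and $2d_2$ to lose against the off-diagonal $0$ at the appropriate parities, so that only the values $0$ and $d_2$ survive. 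I expect the careful bookkeeping of these parities—rather than any single inequality—to be the main subtlety, and I would organise it as a clean case split on $k\bmod2$ to minimise the chance of error.

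Finally, a point worth recording is the boundary value $d_2=0$, which falls under case $(1)$ with $d_1\oplus d_2=0$, so Lemma \ref{corollary cyclicity} then gives cyclicity $1$ rather than $2$; nevertheless the parity formula remains correct because its two output values $0$ and $d_2$ coincide in that situation. This is exactly why I prefer the direct induction above to an argument routed through the Cyclicity Theorem, which would otherwise force a separate treatment of $d_2=0$ and $d_2<0$.
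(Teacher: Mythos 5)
Your proposal is correct and follows essentially the same route as the paper: the paper establishes case $(1)$ by exhibiting the alternating matrices $D^{2t}$ and $D^{2t+1}$ (which is exactly your parity induction in closed form) and handles cases $(2)$ and $(3)$ by the same induction on $k$ used for Proposition \ref{power}, with base case $k=2$ from \eqref{dfang}. Your additional observations — the absence of the extra $0$-term for $n=2$, the symmetry of $D^k$, and the harmless boundary case $d_2=0$ — are all accurate but do not change the argument.
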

\begin{proof}
When $d_1\le d_2\le 0$, it is easy to see that
	\begin{equation*}
D^{2t}=\begin{pmatrix}
			0&d_2\\
			d_2&0
		\end{pmatrix}
\quad\text{and}\quad
		D^{2t+1}=\begin{pmatrix}
			d_2 &0\\
			0&d_2
		\end{pmatrix}
	\end{equation*}
for any integer $t\ge1.$ Then we obtain the proof of $(1)$.
The proof of the remaining cases is similar to the proof of Proposition \ref{power}.
\end{proof}
\begin{corollary}\label{corollary 4.9}
Let $D={\rm pdiag}(d_1,\dots,d_n)$ be a pseudo-diagonal matrix with $d_1\le\dots\le d_n$ and $n\ge2$. If $n=2$ and $d_2\textless0$, then ${\rm attr}(D)=\{x\in\overline{\mathbb{R}}^2:x_1=x_2\}$ and $D$ is not strongly stable. Otherwise, $D$ is strongly stable.
\end{corollary}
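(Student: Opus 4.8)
The plan is to split the argument according to the cyclicity of $D$, exactly as it was computed in Lemma \ref{corollary cyclicity}. Consider first every case falling under the word ``otherwise'', namely $n\ge 3$, or $n=2$ with $d_2\ge 0$. In all of these Lemma \ref{corollary cyclicity} gives that the cyclicity of $D$ equals $1$, and since $D$ is finite (its off-diagonal entries are $0\in\mathbb{R}$), the Cyclicity Theorem then forces the period of $D$ to be $1$; that is, there is an integer $T$ with $D^{k+1}=\lambda(D)\otimes D^k$ for all $k\ge T$. First I would apply this identity to an arbitrary $x\in\overline{\mathbb{R}}^n$: for each $k\ge T$ we get $D^{k+1}\otimes x=\lambda(D)\otimes(D^k\otimes x)$, which says precisely that $D^k\otimes x$ is either $\boldsymbol{\varepsilon}$ or an eigenvector of $D$, hence $D^k\otimes x\in V(D)$. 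Thus $x\in{\rm attr}(D)$, and as $x$ was arbitrary, ${\rm attr}(D)=\overline{\mathbb{R}}^n$, so $D$ is strongly stable. This settles all ``otherwise'' cases uniformly.

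It then remains to treat $n=2$ with $d_1\le d_2<0$. Here Lemma \ref{4.17}(1) gives $\lambda(D)=d_2\oplus 0=0$. My next step would be to pin down $V(D)$. As $D$ is finite, Theorem \ref{lamdaeigen} guarantees that $0$ is the only eigenvalue and that every nonzero eigenvector is real, so it suffices to solve $D\otimes x=x$ for finite $x=(x_1,x_2)^{\rm T}$. The two scalar equations are $\max\{d_1+x_1,\,x_2\}=x_1$ and $\max\{x_1,\,d_2+x_2\}=x_2$, and since $d_1,d_2<0$ both hold exactly when $x_1=x_2$. Adjoining $\boldsymbol{\varepsilon}$ yields $V(D)=\{x\in\overline{\mathbb{R}}^2:x_1=x_2\}$.

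I would then show ${\rm attr}(D)=V(D)$. The inclusion $V(D)\subseteq{\rm attr}(D)$ is immediate from $D^0\otimes x=x$. For the reverse inclusion I would invoke the explicit powers from Proposition \ref{2*2}(1): for $k\ge 2$ one has $D^k=\left(\begin{smallmatrix}0&d_2\\ d_2&0\end{smallmatrix}\right)$ when $k$ is even and $D^k=\left(\begin{smallmatrix}d_2&0\\ 0&d_2\end{smallmatrix}\right)$ when $k$ is odd, while $D^1=D$ and $D^0=I$. Assuming $x_1\ne x_2$, I would verify for each of these shapes that the two coordinates of $D^k\otimes x$ stay distinct, so no iterate lands in $V(D)$ and $x\notin{\rm attr}(D)$. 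This gives ${\rm attr}(D)=\{x:x_1=x_2\}\subsetneq\overline{\mathbb{R}}^2$, whence $D$ is not strongly stable.

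The routine but delicate part, which I expect to be the main obstacle, is this reverse inclusion: one must confirm, uniformly over all $k$ and including coordinates equal to $\varepsilon$, that unequal coordinates of $x$ can never be equalized by a power of $D$. The cleanest bookkeeping is to note that the odd powers and $I$ merely add a common constant to both coordinates, hence preserve $x_1\ne x_2$ (both in $\mathbb{R}$ and in the ``exactly one coordinate is $\varepsilon$'' situation), while the even powers $\bigl(\begin{smallmatrix}0&d_2\\ d_2&0\end{smallmatrix}\bigr)$ with $d_2<0$ send a vector with $x_1>x_2$ to one whose first coordinate remains strictly the larger, with a symmetric statement for $x_1<x_2$; the single case $D^1=D$ is handled by the same strict-negativity estimate.
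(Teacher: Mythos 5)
Your handling of the ``otherwise'' cases coincides with the paper's proof: Lemma \ref{corollary cyclicity} gives cyclicity $1$, the Cyclicity Theorem gives period $1$, and then $D^{T+1}\otimes x=\lambda(D)\otimes D^T\otimes x$ for all $x$ shows ${\rm attr}(D)=\overline{\mathbb{R}}^n$. In the exceptional case $n=2$, $d_2<0$, the paper argues a little more directly: for $x\in{\rm attr}(D)$ the iterates eventually stabilize (since $\lambda(D)=0$), so $D^{2t}\otimes x=D^{2t+1}\otimes x$ for large $t$, and equating the explicit even and odd powers from Proposition \ref{2*2}(1) forces $x_1=x_2$. Your route --- first computing $V(D)=\{x:x_1=x_2\}$ and then showing that unequal coordinates are never equalized by any power --- is an equivalent reorganization resting on the same ingredients.

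There is, however, one incorrect step in your final verification. For odd $k\ge3$ the power is
\[
D^k=\begin{pmatrix} d_2 & 0\\ 0 & d_2\end{pmatrix},
\]
whose \emph{off-diagonal entries are $0$, not $\varepsilon$}; hence
\[
D^k\otimes x=\bigl(\max\{d_2+x_1,\,x_2\},\ \max\{x_1,\,d_2+x_2\}\bigr)^{\rm T},
\]
which does not ``add a common constant to both coordinates.'' Indeed, if $x_1>x_2$ then the second coordinate of $D^k\otimes x$ equals $x_1$ while the first is strictly smaller, so the odd powers actually reverse which coordinate is larger rather than preserving the order by a shift. The conclusion you need is nonetheless true: for each of the four shapes $I$, $D$, $D^{\mathrm{even}}$, $D^{\mathrm{odd}}$, exactly one coordinate of the image equals $\max\{x_1,x_2\}$ and the other is strictly smaller (using $d_1\le d_2<0$), and this also covers the case where one entry of $x$ is $\varepsilon$. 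So distinctness of the coordinates is preserved, but the justification must be this strict-inequality computation, not constant-shifting. With that local repair your argument is complete and matches the paper's in substance.
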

\begin{proof}
If $n=2$ and $d_2\textless0$, let $x\in {\rm attr}(D)$, note that $\lambda(D)=0$, we have that
\begin{equation*}
		\begin{pmatrix}
			d_2 &0\\
			0&d_2
		\end{pmatrix}\otimes \begin{pmatrix}
			x_1\\
			x_2
		\end{pmatrix}=\begin{pmatrix}
			0 &d_2\\
			d_2 &0
		\end{pmatrix}\otimes\begin{pmatrix}
			x_1\\
			x_2
		\end{pmatrix}.
	\end{equation*}
It follows that $x_1=x_2.$ Thus, ${\rm attr}(D)=\{x\in\overline{\mathbb{R}}^2:x_1=x_2\}\subsetneq\overline{\mathbb{R}}^2.$
	
For the other cases, by Lemma \ref{corollary cyclicity} and the Cyclicity Theorem, we obtain that the period of $D$ is $1$. Then there is a positive integer $T$ such that $D^{k+1}=\lambda(D)\otimes D^k$ for any $k\ge T.$ So, $D^{T+1}\otimes x=\lambda(D)\otimes D^T\otimes x$ for any $x\in\overline{\mathbb{R}}^n.$ Hence, $D$ is strongly stable.
\end{proof}

Using the above conclusions, we can answer a question on matrix roots, which was raised in \cite{daniel}. Actually, the matrix power and matrix root problems for $2\times2$ matrices have been completely solved in \cite{daniel}. Let us give the following characterization on matrix roots for any $n\times n$ matrix.
\begin{theorem}{\textup{(\hspace{-0.005cm}\cite{daniel})}}
	Let $A=(a_{ij})\in\mathbb{R}^{n\times n}$ such that
	\begin{equation}\label{4.4}
		a_{ij}a_{tt}\ge a_{it}a_{tj}
	\end{equation}
for any $i,j,t\in[n]$.
Then for each positive integer $k$,
\begin{equation*}
B=(a_{ij}(a_{ii}\oplus a_{jj})^{\frac{1-k}{k}})\in\mathbb{R}^{n\times n}
\end{equation*}
is a $k$ root of $A$, that is, $B^k=A$.
\end{theorem}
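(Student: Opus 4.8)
The plan is to verify $B^{k}=A$ entry by entry through the walk description of max-plus powers. Put $c_{i}:=a_{ii}$, so that the hypothesis \eqref{4.4} reads $a_{it}+a_{tj}\le a_{ij}+c_{t}$ for all $i,j,t$, and the entries of $B$ are $b_{ij}=a_{ij}\otimes(c_{i}\oplus c_{j})^{(1-k)/k}$, i.e. $b_{ij}=a_{ij}-\frac{k-1}{k}\max\{c_{i},c_{j}\}$ in ordinary arithmetic; in particular $B$ is finite and $b_{ii}=\frac1k c_{i}$. Since $(B^{k})_{ij}$ is the largest $b$-weight $\sum_{\ell=0}^{k-1}b_{t_\ell t_{\ell+1}}$ of a length-$k$ walk $i=t_{0},\dots,t_{k}=j$, it suffices to show $(B^{k})_{ij}=a_{ij}$ for every $i,j$, which I would obtain from a matching lower and upper bound.

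For $(B^{k})_{ij}\ge a_{ij}$ I would exhibit a single walk attaining $a_{ij}$. Assuming $c_{i}\ge c_{j}$ (the reverse case is symmetric), take the walk that stays at $i$ for the first $k-1$ steps and moves to $j$ on the last one; its weight is $(k-1)b_{ii}+b_{ij}=\frac{k-1}{k}c_{i}+\bigl(a_{ij}-\frac{k-1}{k}c_{i}\bigr)=a_{ij}$.

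The upper bound $(B^{k})_{ij}\le a_{ij}$ is the heart of the matter, and I would isolate two lemmas. The first is a contraction estimate: for every walk $i=t_{0},\dots,t_{k}=j$,
\begin{equation*}
\sum_{\ell=0}^{k-1}a_{t_\ell t_{\ell+1}}\le a_{ij}+\sum_{\ell=1}^{k-1}c_{t_\ell},
\end{equation*}
proved by induction on $k$, peeling off one edge and using \eqref{4.4} to absorb an intermediate node at the cost of its diagonal value. The second is a purely numerical inequality in the diagonal values $d_{\ell}:=c_{t_\ell}$ along the walk:
\begin{equation*}
\sum_{\ell=1}^{k-1}d_{\ell}\le\frac{k-1}{k}\sum_{\ell=0}^{k-1}\max\{d_{\ell},d_{\ell+1}\}.
\end{equation*}
Writing the $b$-weight of the walk as $\sum_{\ell}a_{t_\ell t_{\ell+1}}-\frac{k-1}{k}\sum_{\ell}\max\{d_{\ell},d_{\ell+1}\}$ and feeding in the two lemmas bounds it by $a_{ij}$, as required.

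The main obstacle is this numerical inequality. The naive idea --- reroute the walk along the shortcut edge $i\to j$ and re-insert the deleted vertices as self-loops --- is illegitimate, since deleting a vertex leaves no place to attach its loop and breaks connectivity. Instead I would argue directly: from $\max\{a,b\}=\tfrac12(a+b+|a-b|)$ one gets $\sum_{\ell=0}^{k-1}\max\{d_{\ell},d_{\ell+1}\}=\sum_{\ell=1}^{k-1}d_{\ell}+R$ with $R=\tfrac12(d_{0}+d_{k})+\tfrac12\sum_{\ell=0}^{k-1}|d_{\ell}-d_{\ell+1}|$, and the desired inequality is equivalent to $\sum_{\ell=1}^{k-1}d_{\ell}\le(k-1)R$. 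Since the total variation satisfies $\sum_{\ell=0}^{k-1}|d_{\ell}-d_{\ell+1}|\ge 2M-d_{0}-d_{k}$ for $M:=\max_{\ell}d_{\ell}$, we have $R\ge M$, whence $(k-1)R\ge(k-1)M\ge\sum_{\ell=1}^{k-1}d_{\ell}$. Finally I would dispose of the trivial case $k=1$, where $B=A$, separately.
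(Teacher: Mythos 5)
The paper does not prove this theorem; it is quoted verbatim from Jones \cite{daniel}, so there is no internal argument to compare yours against. Judged on its own, your proof is correct and self-contained. The lower bound is right: with $c_i\ge c_j$ the walk looping $k-1$ times at $i$ and then jumping to $j$ has $b$-weight $(k-1)\tfrac{c_i}{k}+a_{ij}-\tfrac{k-1}{k}c_i=a_{ij}$, and the symmetric walk handles $c_j>c_i$. For the upper bound, your first lemma is a clean induction on the walk length using \eqref{4.4} to contract the last intermediate node at the cost of its diagonal entry, and your second, purely numerical lemma is where the real content lies; the reduction via $\max\{a,b\}=\tfrac12(a+b+|a-b|)$ to the inequality $\sum_{\ell=1}^{k-1}d_\ell\le(k-1)R$, followed by the total-variation bound $\sum_{\ell}|d_\ell-d_{\ell+1}|\ge 2M-d_0-d_k$ (split the walk at an argmax of $M:=\max_{0\le\ell\le k}d_\ell$) giving $R\ge M$ and hence $(k-1)R\ge(k-1)M\ge\sum_{\ell=1}^{k-1}d_\ell$, is airtight. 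I also appreciate that you flagged and avoided the tempting but broken ``delete intermediate vertices and reattach their loops'' rerouting. The only stylistic remark is that the separate treatment of $k=1$ is unnecessary (the general argument degenerates correctly, since $B=A$ and all intermediate sums are empty), but it does no harm.
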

\textbf{Question} in \cite{daniel}: Does there exist a matrix $A\in\mathbb{R}^{n\times n}$ and a natural number $k\ge2$ for which $A$ does not satisfy the root condition (\ref{4.4}) but there exists a matrix $B\in\mathbb{R}^{n\times n}$ such that
$B^k=A$? Now we provide an answer as follows.

\begin{proposition}
Given integers $n\ge 3$ and $k\ge 2$. Let $A={\rm pdiag}(a_1,a_2,\dots,a_n)$ be a pseudo-diagonal matrix with $0\le a_1\le\dots\le a_n$. If there exists $2\le i\le n-1$ such that
	\begin{equation*}
		a_i\textgreater (a_n^{k-2}\oplus a_1^{k})^{\frac{1}{k-1}},
	\end{equation*}
	then $D=A^k$ does not satisfy the root condition.
\end{proposition}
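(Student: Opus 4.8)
The plan is to violate the root condition \eqref{4.4} by exhibiting a single triple of indices at which it fails. Since $0\le a_1\le\cdots\le a_n$ and $n\ge 3$, Corollary \ref{positivedn} gives all entries of $D=A^k$ explicitly, and the only facts I need are that, for $i\ne j$, $D_{ij}=a_{i\oplus j}^{\,k-1}\oplus a_n^{\,k-2}$ with $a_{i\oplus j}=a_{\max\{i,j\}}$ (the subscript being the larger index, since the diagonal of $A$ is nondecreasing), while $D_{ii}=a_i^{\,k}\oplus a_n^{\,k-2}$. I would first record the hypothesis for the guaranteed index, which I rename $i_0$ so as not to clash with the triple below: there is $i_0$ with $2\le i_0\le n-1$ and, after raising both sides to the $(k-1)$-st $\otimes$-power (order-preserving), $a_{i_0}^{\,k-1}>a_n^{\,k-2}\oplus a_1^{\,k}$. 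Equivalently, in ordinary arithmetic, $(k-1)a_{i_0}>\max\{(k-2)a_n,\,k a_1\}$.

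Next I would test the triple $(i,j,t)=(i_0,n,1)$, whose three entries are distinct precisely because $2\le i_0\le n-1$ and $n\ge 3$. Evaluating the four relevant entries with Corollary \ref{positivedn}, and using $a_n\ge 0$ so that $a_n^{\,k-1}\ge a_n^{\,k-2}$, gives $D_{i_0 n}=a_n^{\,k-1}$, $D_{1n}=a_n^{\,k-1}$, $D_{11}=a_1^{\,k}\oplus a_n^{\,k-2}$, and $D_{i_0 1}=a_{i_0}^{\,k-1}\oplus a_n^{\,k-2}$. Substituting into the instance $D_{i_0 n}\otimes D_{11}\ge D_{i_0 1}\otimes D_{1n}$ of \eqref{4.4} and cancelling the common factor $a_n^{\,k-1}=D_{i_0 n}=D_{1n}$ (that is, subtracting $(k-1)a_n$ from both sides), the condition collapses to $a_1^{\,k}\oplus a_n^{\,k-2}\ge a_{i_0}^{\,k-1}\oplus a_n^{\,k-2}$.

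Finally I would read off the contradiction. Since $a_{i_0}^{\,k-1}>a_n^{\,k-2}$ the right-hand side equals $a_{i_0}^{\,k-1}$, whereas the hypothesis also yields $a_{i_0}^{\,k-1}>a_1^{\,k}$ and $a_{i_0}^{\,k-1}>a_n^{\,k-2}$, so the left-hand side $a_1^{\,k}\oplus a_n^{\,k-2}$ is strictly smaller than $a_{i_0}^{\,k-1}$. Hence the reduced inequality, and with it \eqref{4.4} for $(i,j,t)=(i_0,n,1)$, fails; this is exactly what is needed to conclude that $D=A^k$ does not satisfy the root condition.

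The only genuinely nontrivial step is the choice of triple: most natural selections do not witness a failure. Taking the pivot $t=n$ or $t=i_0$ makes the corresponding diagonal entry $a_t^{\,k}$ dominate, and because $a_t^{\,k}\ge a_t^{\,k-1}$ the condition is then automatically satisfied; moreover every entry of $D$ is bounded below by the floor $a_n^{\,k-2}$, which also protects those instances. The decisive idea is to use the smallest index $1$ as the pivot, keeping the lone diagonal term $D_{11}$ small, and to route the ``detour'' $i_0\to 1\to n$ through the off-diagonal entry $D_{i_0 1}=a_{i_0}^{\,k-1}$ that the hypothesis forces to be large; once this triple is fixed, everything else is a routine substitution.
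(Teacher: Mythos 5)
Your proof is correct and follows essentially the same route as the paper: both test the root condition at the triple $(i_0,n,1)$, read off $D_{11}$, $D_{1n}$, $D_{i_01}$, $D_{i_0n}$ from Corollary \ref{positivedn}, and use the hypothesis $a_{i_0}^{k-1}>a_n^{k-2}\oplus a_1^{k}$ to see that $D_{i_0n}\otimes D_{11}<D_{i_01}\otimes D_{1n}$. Your closing discussion of why other choices of pivot fail is a helpful addition, but the core argument is identical to the paper's.
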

\begin{proof}
By Corollary \ref{positivedn}, we have that
	\begin{flalign*}
		D_{11}=a_1^k\oplus a_n^{k-2},D_{1n}=a_n^{k-1},D_{i1}=a_i^{k-1}\oplus a_n^{k-2},D_{in}=a_n^{k-1}.
	\end{flalign*}
	Thus, $D_{11}D_{in}(D_{i1}D_{1n})^{-1}
	=(a^k_1\oplus a_n^{k-2})\otimes (a_i^{k-1})^{-1}
	\textless0.$
	That is, $D$ does not satisfy the root condition (\ref{4.4}).
\end{proof}

\subsection{Optimal-node matrices and separable matrices}
First of all, let us recall the definitions of optimal-node matrices and separable matrices in the max-plus algebra.
\begin{definition}{\textup{(\hspace{-0.005cm}\cite{wanghuili})}}
	Let $A=(a_{ij})\in\mathbb{R}^{n\times n}$. If there exists an integer $k\in[n]$ such that
	\begin{equation*}
		a_{ik}a_{kj}\ge a_{il}a_{lj}
	\end{equation*}
	for any $i,j,l\in [n]$, then $A$ is called an \emph{optimal-node matrix} and every such $k$ is called an \emph{optimal node} of $A$. The set of optimal nodes of $A$ is denoted by $K(A)$.
\end{definition}
\begin{definition}{\textup{(\hspace{-0.005cm}\cite{wanghuili})}}
	Let $A=(a_{ij})\in\mathbb{R}^{n\times n}$. If there are $u_1,\dots,u_n,v_1,\dots,v_n$ in $\mathbb{R}$ such that
	\begin{equation*}
		a_{ij}=u_i+v_j
	\end{equation*}
	for any $i,j\in [n],$ then $A$ is called a \emph{separable matrix.}
\end{definition}
The eigenspace of any optimal-node matrix is one-dimensional. Explicitly, we have the following.
\begin{proposition}{\textup{(\hspace{-0.005cm}\cite{wanghuili})}}\label{optimaleigen}
	Let $A=(a_{ij})$ be an optimal-node matrix and $k\in K(A)$, then $\lambda(A)=a_{kk}$ and the $k$-th column of $A$ spans the eigenspace of $A$, and thus $d(A)=1.$
\end{proposition}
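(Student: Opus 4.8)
The plan is to exploit the defining inequality of an optimal node, which says exactly that the index $k$ realizes the maximum $(A^2)_{ij}=\max_l(a_{il}+a_{lj})=a_{ik}+a_{kj}$ for every pair $i,j$. Since $A$ is finite, Theorem~\ref{lamdaeigen} tells us that $\lambda(A)$ is the \emph{unique} eigenvalue of $A$ and that every eigenvector other than $\boldsymbol{\varepsilon}$ lies in $\mathbb{R}^n$. This reduces the whole statement to producing one explicit eigenvector and then showing it is unique up to a max-plus scalar.

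First I would verify directly that the $k$-th column $A_k$ is an eigenvector for the value $a_{kk}$. For each $i$, the optimal-node inequality with $j=k$ gives $(A\otimes A_k)_i=\max_l(a_{il}+a_{lk})=a_{ik}+a_{kk}$, that is, $A\otimes A_k=a_{kk}\otimes A_k$. Because $A$ is finite, $A_k\in\mathbb{R}^n$ is a genuine (non-$\boldsymbol{\varepsilon}$) eigenvector, so $a_{kk}$ is an eigenvalue; by the uniqueness of the eigenvalue in Theorem~\ref{lamdaeigen} we conclude $\lambda(A)=a_{kk}$, which is the first assertion, and also $A_k\in V(A)$, giving the inclusion ${\rm span}(A_k)\subseteq V(A)$.

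The substantive step is the reverse inclusion $V(A)\subseteq{\rm span}(A_k)$. Let $x$ be any eigenvector with $x\neq\boldsymbol{\varepsilon}$; by Theorem~\ref{lamdaeigen} it is finite and satisfies $A\otimes x=a_{kk}\otimes x$. The key device is to apply $A$ twice. On one hand $A^2\otimes x=a_{kk}^2\otimes x$, so $(A^2\otimes x)_i=2a_{kk}+x_i$. On the other hand, using $(A^2)_{ij}=a_{ik}+a_{kj}$ I can factor the maximum through the index $k$: $(A^2\otimes x)_i=a_{ik}+\max_j(a_{kj}+x_j)=a_{ik}+(A\otimes x)_k=a_{ik}+a_{kk}+x_k$. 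Equating the two expressions yields $x_i=a_{ik}+(x_k-a_{kk})$ for every $i$, that is, $x=(x_k-a_{kk})\otimes A_k$. Hence every eigenvector is a max-plus multiple of $A_k$, so $A_k$ spans $V(A)$; being a single nonzero vector it is independent and therefore a basis, whence $d(A)=1$.

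The only place needing an idea rather than a routine reading of the definitions is this last inclusion: a one-step argument from $A\otimes x=a_{kk}\otimes x$ does not pin down $x$, and it is the square $A^2$, whose entries collapse to the rank-one outer product $a_{ik}\otimes a_{kj}$ by the optimal-node property, that forces $x$ to be proportional to $A_k$. Everything else follows immediately from the optimal-node inequality together with the uniqueness of the eigenvalue for finite matrices.
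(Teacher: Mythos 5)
The paper does not prove this proposition; it is quoted from the reference [Wang--Wang] without proof, so there is no in-paper argument to compare against. Your proof is correct and self-contained: the optimal-node inequality gives $(A^2)_{ij}=a_{ik}\otimes a_{kj}$, the case $j=k$ shows $A\otimes A_k=a_{kk}\otimes A_k$, and since $A$ is finite Theorem~\ref{lamdaeigen} forces $\lambda(A)=a_{kk}$ and every eigenvector to be finite with this eigenvalue; applying $A$ twice and factoring the maximum through $k$ then pins down $x_i=a_{ik}\otimes x_k\otimes a_{kk}^{-1}$, so $V(A)={\rm span}(A_k)$ and $d(A)=1$. The rank-one collapse of $A^2$ is exactly the right device for the reverse inclusion, and no step is missing.
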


The following two propositions show the invariance of optimal-node matrices and separable matrices under similarity. For simplicity of notation, given a generalized permutation matrix $P\in\overline{\mathbb{R}}^{n\times n}$ with permutation $\pi$, we set $p_i=P_{i,\pi(i)}$ for any $i\in[n]$.
\begin{proposition}\label{sim optimal}
	Let $P\in\overline{\mathbb{R}}^{n\times n}$ be a generalized permutation matrix with permutation $\pi$. Then, $A=(a_{ij})\in\mathbb{R}^{n\times n}$ is optimal-node with $K(A)=\{k_1,\dots,k_m\}$ if and only if $P^{-1}AP$ is optimal-node with $K(P^{-1}AP)=\{\pi(k_1),\dots,\pi(k_m)\}$.
\end{proposition}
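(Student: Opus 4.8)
The plan is to reduce everything to the entrywise conjugation formula \eqref{xsyg} and then compare the defining optimal-node inequalities for $A$ and for $B:=P^{-1}AP$ term by term. First I would record how the entries transform. Since $B=P^{-1}AP$ means $A=PBP^{-1}$, formula \eqref{xsyg} gives $a_{ij}=p_iB_{\pi(i),\pi(j)}p_j^{-1}$, and hence
\[
B_{\pi(i),\pi(j)}=p_i^{-1}a_{ij}p_j\qquad(i,j\in[n]).
\]
Because $\pi$ is a bijection of $[n]$, every index of $B$ has the form $\pi(i)$, so this describes all entries of $B$ in terms of those of $A$ and the scalars $p_i=P_{i,\pi(i)}$.

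Next I would test a candidate optimal node $\pi(k)$ of $B$. Writing arbitrary indices of $B$ as $\pi(i),\pi(j),\pi(l)$, the defining inequality
\[
B_{\pi(i),\pi(k)}\,B_{\pi(k),\pi(j)}\ge B_{\pi(i),\pi(l)}\,B_{\pi(l),\pi(j)}
\]
becomes, after substituting the formula above,
\[
p_i^{-1}a_{ik}p_k\,p_k^{-1}a_{kj}p_j\ge p_i^{-1}a_{il}p_l\,p_l^{-1}a_{lj}p_j.
\]
Here $p_kp_k^{-1}=0=p_lp_l^{-1}$, and the outer factors $p_i^{-1}$ and $p_j$ are finite and common to both sides, so they may be cancelled (multiplying a max-plus inequality by a finite, hence invertible, scalar preserves and reverses the inequality). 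Thus the inequality is equivalent to $a_{ik}a_{kj}\ge a_{il}a_{lj}$.

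Finally, since $(i,j,l)\mapsto(\pi(i),\pi(j),\pi(l))$ is a bijection of $[n]^3$, the node $\pi(k)$ satisfies the optimal-node inequality for $B$ over all triples if and only if $k$ satisfies it for $A$ over all triples; that is, $\pi(k)\in K(B)\iff k\in K(A)$. This single equivalence yields $K(B)=\pi(K(A))=\{\pi(k_1),\dots,\pi(k_m)\}$ and, since $\pi$ is a bijection, $K(B)\ne\emptyset\iff K(A)\ne\emptyset$, so $B$ is optimal-node exactly when $A$ is. The whole argument is reversible, giving both directions of the proposition at once. I do not expect a genuine obstacle here: the only points requiring care are justifying the scalar cancellation and checking the bijective reindexing, and the essential mechanism is simply the telescoping cancellation $p_kp_k^{-1}=0$, which makes the optimal-node inequality insensitive to the diagonal scaling part of $P$ while the index permutation is carried by $\pi$.
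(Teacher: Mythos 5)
Your proposal is correct and follows essentially the same route as the paper: both express the entries of $P^{-1}AP$ via \eqref{xsyg}, use the telescoping cancellation $p_kp_k^{-1}=0$ together with cancellation of the common finite outer factors, and track the optimal nodes through the permutation $\pi$. The only difference is presentational --- you establish the equivalence of the two inequalities directly and get both directions at once, while the paper proves the inclusion $\pi(K(A))\subseteq K(P^{-1}AP)$ and appeals to symmetry for the reverse inclusion.
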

\begin{proof}
	If $A$ is optimal-node with $K(A)=\{k_1,\dots,k_m\}$, then we have that
	\begin{equation*}
		a_{ik_t}a_{k_tj}\ge a_{il}a_{lj}
	\end{equation*}
	for any $i,j,l\in [n]$ and $t\in [m].$
	Set $\sigma=\pi^{-1}$ and $b_{ij}=(P^{-1}AP)_{ij}$. By \eqref{xsyg}, we obtain that $b_{ij}=p_{\sigma(i)}^{-1}a_{\sigma(i)\sigma(j)}p_{\sigma(j)}$, and then
	\begin{flalign*}		b_{i,\pi(k_t)}b_{\pi(k_t),j}&=p_{\sigma(i)}^{-1}a_{\sigma(i),\sigma(\pi(k_t))}p_{\sigma(\pi(k_t))}p_{\sigma(\pi(k_t))}^{-1}a_{\sigma(\pi(k_t)),\sigma(j)}p_{\sigma(j)}\\
		&=p_{\sigma(i)}^{-1}a_{\sigma(i),k_t}a_{k_t,\sigma(j)}p_{\sigma(j)}\\
		&\ge p_{\sigma(i)}^{-1}a_{\sigma(i),\sigma(l)}a_{\sigma(l),\sigma(j)}p_{\sigma(j)}\\
		&=p_{\sigma(i)}^{-1}a_{\sigma(i),\sigma(l)}p_{\sigma(l)}p_{\sigma(l)}^{-1}a_{\sigma(l),\sigma(j)}p_{\sigma(j)}\\
		&=b_{il}b_{lj}
	\end{flalign*}
	for any $i,j,l\in [n]$ and $t\in [m]$. Hence, $P^{-1}AP$ is optimal-node and $\pi(k_t)\in K(P^{-1}AP)$, i.e., $\pi(K(A))\subseteq K(P^{-1}AP)$.

Conversely, if $P^{-1}AP$ is optimal-node with $K(P^{-1}AP)=\{\pi(k_1),\dots,\pi(k_m)\}$, by the similar arguments as above, we obtain that $A$ is optimal-node and
$$\pi^{-1}(K(P^{-1}AP))\subseteq K(A).$$
Therefore, we complete the proof.
\end{proof}

\begin{proposition}\label{sim separa}
	Let $P\in\overline{\mathbb{R}}^{n\times n}$ be a generalized permutation matrix with permutation $\pi$. Then, $A\in\mathbb{R}^{n\times n}$ is separable if and only if $PAP^{-1}$ is separable.
\end{proposition}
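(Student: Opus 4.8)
The plan is to reduce everything to a single entrywise identity for the conjugate matrix and then simply read off new separating parameters. First I would apply \eqref{xsyg} (with the roles of the two matrices interchanged, since here the conjugate is $PAP^{-1}$ rather than $PBP^{-1}$) to obtain, for $B:=PAP^{-1}$,
\[
B_{ij}=P_{i,\pi(i)}\,a_{\pi(i),\pi(j)}\,(P_{j,\pi(j)})^{-1}=p_i\otimes a_{\pi(i),\pi(j)}\otimes p_j^{-1},
\]
which in ordinary arithmetic reads $B_{ij}=p_i+a_{\pi(i),\pi(j)}-p_j$. Since $A\in\mathbb{R}^{n\times n}$ and the $p_i$ are finite, this already shows $B\in\mathbb{R}^{n\times n}$, so we stay inside the setting in which separability is defined.

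For the forward implication, assume $A$ is separable, say $a_{ij}=u_i+v_j$ for some $u_i,v_j\in\mathbb{R}$. Substituting into the displayed identity gives
\[
B_{ij}=p_i+u_{\pi(i)}+v_{\pi(j)}-p_j=\big(p_i+u_{\pi(i)}\big)+\big(v_{\pi(j)}-p_j\big).
\]
Setting $u_i'=p_i+u_{\pi(i)}$ and $v_j'=v_{\pi(j)}-p_j$, both of which lie in $\mathbb{R}$, we get $B_{ij}=u_i'+v_j'$ for all $i,j\in[n]$, so $B$ is separable.

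For the converse I would not redo the computation but instead exploit the symmetry of the situation. By Theorem \ref{kelip}, $P^{-1}$ is again a generalized permutation matrix, with permutation $\pi^{-1}$. Writing $A=P^{-1}BP=P^{-1}B(P^{-1})^{-1}$ and applying the forward implication with $P^{-1}$ in place of $P$, we conclude that separability of $B$ forces separability of $A$. This yields the ``if and only if''.

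The argument involves no genuine obstacle; the only point requiring care is the index bookkeeping in the conjugation formula—namely, keeping straight that it is $a_{\pi(i),\pi(j)}$ (and not $a_{\pi^{-1}(i),\pi^{-1}(j)}$) that appears, and that the scalar factors $p_i$ and $p_j^{-1}$ attach to the row and column indices respectively. Once the displayed identity is pinned down correctly, the regrouping that exhibits the new separating data $u_i',v_j'$ is immediate.
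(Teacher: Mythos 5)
Your proposal is correct and follows essentially the same route as the paper: apply the conjugation formula \eqref{xsyg} to get $(PAP^{-1})_{ij}=p_i\otimes a_{\pi(i),\pi(j)}\otimes p_j^{-1}$ and regroup to read off the new separating data $u_i'=p_i\otimes u_{\pi(i)}$, $v_j'=v_{\pi(j)}\otimes p_j^{-1}$. The paper dispatches the converse with ``the proof of sufficiency is similar,'' while you make it explicit via the symmetry $A=P^{-1}B(P^{-1})^{-1}$; that is a harmless, slightly cleaner packaging of the same argument.
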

\begin{proof}
Suppose that $A=(a_{ij})$ is separable, i.e., there are $u_1,\dots,u_n,v_1,\dots,v_n$ such that $a_{ij}=u_iv_j$ for any $i,j\in [n].$
Then we have that
$$(PAP^{-1})_{ij}=p_ia_{\pi(i)\pi(j)}p_{j}^{-1}=p_iu_{\pi(i)}v_{\pi(j)}p^{-1}_j.$$
Set $u'_i=p_iu_{\pi(i)},v'_j=v_{\pi(j)}p_j^{-1}$, then $(PAP^{-1})_{ij}=u'_iv'_j$ for any $i,j\in [n].$ Hence, we finish the proof of necessity, and the proof of sufficiency is similar.
\end{proof}
\begin{proposition}
		Each separable matrix is similar to a symmetric matrix.
	\end{proposition}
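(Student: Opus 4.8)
The plan is to achieve symmetry by conjugating with a purely diagonal matrix that redistributes the asymmetric row and column data of a separable matrix evenly across its two indices. Writing the separable matrix as $A=(a_{ij})$ with $a_{ij}=u_iv_j$ for some $u_1,\dots,u_n,v_1,\dots,v_n\in\mathbb{R}$, I observe that the only obstruction to the symmetry $a_{ij}=a_{ji}$ is the per-index discrepancy between $u_i$ and $v_i$; since a diagonal conjugation rescales the $i$-th row and the $i$-th column by reciprocal factors, it is exactly the right tool to split this discrepancy in half.

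Concretely, I would set $p_i=(u_iv_i^{-1})^{1/2}$ for each $i\in[n]$ and let $P={\rm diag}(p_1,\dots,p_n)$; since every $p_i$ lies in $\mathbb{R}$, the matrix $P$ is a genuine diagonal matrix and hence invertible by Theorem \ref{kelip}. As $P$ has the identity permutation, the computation that produced \eqref{Dij} (applied to $P^{-1}AP$ in place of $PAP^{-1}$, so that the diagonal factors are $p_i^{-1}$) yields $(P^{-1}AP)_{ij}=p_i^{-1}a_{ij}p_j$ for all $i,j\in[n]$. Substituting $a_{ij}=u_iv_j$ and $p_i=(u_iv_i^{-1})^{1/2}$ and simplifying in the max-plus sense, where the exponent $1/2$ denotes halving in $\mathbb{R}$, the factors recombine into $(P^{-1}AP)_{ij}=(u_iv_i)^{1/2}(u_jv_j)^{1/2}$. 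Putting $s_i=(u_iv_i)^{1/2}$, this reads $(P^{-1}AP)_{ij}=s_is_j$, which is visibly invariant under interchanging $i$ and $j$; thus $P^{-1}AP$ is the desired symmetric matrix similar to $A$.

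I do not expect any genuine obstacle: once one commits to a diagonal conjugation, the argument is forced. Indeed, the symmetry requirement $(P^{-1}AP)_{ij}=(P^{-1}AP)_{ji}$ amounts, in max-plus notation, to $a_{ij}p_j^2=a_{ji}p_i^2$, and substituting $a_{ij}=u_iv_j$ shows this holds for all $i,j$ precisely when $p_i^2u_i^{-1}v_i$ is independent of $i$; the choice $p_i^2=u_iv_i^{-1}$ is the simplest such normalization and fixes $p_i$ up to a global scalar that does not affect $P^{-1}AP$. The only minor points needing care are checking that $p_i\in\mathbb{R}$ (so that $P$ is legitimate) and performing the max-plus arithmetic accurately. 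It is worth noting that no nontrivial permutation part of $P$ is required, consistent with Proposition \ref{sim separa}, which already records that separability survives conjugation by any generalized permutation matrix; symmetrization is therefore a purely diagonal effect.
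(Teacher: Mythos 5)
Your proof is correct and is essentially the paper's own argument: the paper conjugates by ${\rm diag}(p_1,\dots,p_n)$ with $p_i=u_i^{-\frac{1}{2}}v_i^{\frac{1}{2}}$ (the inverse of your $P$) and arrives at the same symmetric matrix $(P A P^{-1})_{ij}=(u_iv_iu_jv_j)^{\frac{1}{2}}=s_is_j$. The only difference is the cosmetic choice of writing the conjugation as $P^{-1}AP$ rather than $PAP^{-1}$.
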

	\begin{proof}
	Let $A=(a_{ij})$ be a separable matrix, i.e., there are $u_1,\dots,u_n,v_1,\dots,v_n$ such that $a_{ij}=u_iv_j$ for any $i,j\in [n].$ Let $P={\rm diag}(p_1,\dots,p_n)$ with $p_i=u_i^{-\frac{1}{2}}v_i^{\frac{1}{2}}$ for any $i\in[n]$. Clearly, $P$ is invertible, and $$(PAP^{-1})_{ij}=p_ia_{ij}p_j^{-1}=u_i^{-\frac{1}{2}}v_i^{\frac{1}{2}}u_iv_ju_j^{\frac{1}{2}}v_j^{-\frac{1}{2}}=(u_iv_iu_jv_j)^{\frac{1}{2}}.$$ Hence, $(PAP^{-1})_{ij}=(PAP^{-1})_{ji}$, i.e., $PAP^{-1}$ is symmetric.
	\end{proof}

In what follows, we shall determine which pseudo-diagonalizable matrices are optimal-node or separable. First of all, let us
consider the case of pseudo-diagonal matrices.
	\begin{proposition}\label{4.14}
		Let $A=(a_{ij})\in\mathbb{R}^{n\times n}$ be a pseudo-diagonal matrix with $n\ge2$.
		
		$(1)$ If $n=2$, then $A$ is separable if and only if $a_{11}+a_{22}=0.$
		
		$(2)$ If $n\ge3$, then $A$ is separable if and only if $A=0.$
		
		$(3)$ $A$ is optimal-node if and only if there exists an integer $k\in[n]$ such that $a_{kk}\ge0$ and $a_{ii}\le0$ for any $i\in[n]$ with $i\neq k.$
	\end{proposition}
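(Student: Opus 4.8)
The plan is to handle the three assertions separately, using throughout the single structural fact that a pseudo-diagonal matrix has $a_{ij}=0$ for $i\neq j$, so that its only data are the diagonal entries; write $d_i:=a_{ii}$ for brevity. Since all entries are finite, the max-plus products appearing in the definitions are ordinary sums of reals, and the order $\ge$ is the usual one, which is what makes every case reduce to an elementary numerical inequality.

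For the separability statements $(1)$ and $(2)$ I would unwind the definition directly. If $A$ is separable, say $a_{ij}=u_i v_j=u_i+v_j$, then the off-diagonal equations read $u_i+v_j=0$ for $i\neq j$, while the diagonal equations record $u_i+v_i=d_i$. For $n=2$ there are exactly the two relations $u_1+v_2=0$ and $u_2+v_1=0$; adding the two diagonal equations and subtracting these yields $d_1+d_2=0$, and conversely the choice $u_1=d_1$, $u_2=0$, $v_1=0$, $v_2=-d_1$ realizes any $A$ with $d_1+d_2=0$. For $n\ge3$ the extra off-diagonal relations become decisive: fixing one row index and letting the column index vary shows that the corresponding $v_j$ all coincide, and comparing the relations obtained from two distinct fixed rows---which is possible precisely because $n\ge3$---forces all the $u_i$ to be equal to a common value $c$, hence all $v_j=-c$, and therefore every $d_i=u_i+v_i=0$, i.e. $A=0$; the converse is immediate.

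For the optimal-node statement $(3)$ I would first record the two relevant products. For the middle index $k$, $a_{ik}a_{kj}$ equals $d_k^2$ if $i=j=k$, equals $d_k$ if exactly one of $i,j$ equals $k$, and equals $0$ otherwise; the general product $a_{il}a_{lj}$ equals $d_i^2$ if $l=i=j$, equals $d_i$ (resp. $d_j$) if $l=i\neq j$ (resp. $l=j\neq i$), and equals $0$ if $l\notin\{i,j\}$. In the forward direction, assuming $k$ is an optimal node, taking $i=j=k$ and any $l\neq k$ gives $d_k^2\ge a_{kl}a_{lk}=0$, whence $d_k\ge0$; and for any $m\neq k$, taking $i=j=m$, $l=m$ gives $0=a_{mk}a_{km}\ge a_{mm}a_{mm}=d_m^2$, whence $d_m\le0$. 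For the converse, assuming $d_k\ge0$ and $d_i\le0$ for all $i\neq k$, I would verify the optimality inequality $a_{ik}a_{kj}\ge a_{il}a_{lj}$ for every $i,j,l$ by checking that $a_{ik}a_{kj}$ already equals the maximum $\max_l a_{il}a_{lj}=(A^2)_{ij}$, where $(A^2)_{ii}=d_i^2\oplus0$ and $(A^2)_{ij}=d_i\oplus d_j\oplus0$ for $i\neq j$; in each case the comparison collapses at once on using $d_k\ge0\ge d_i$ for $i\neq k$.

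The bookkeeping in part $(3)$ is the only place needing care: one must keep track of which of $i,j,l$ coincide with $k$ and with one another. I do not expect genuine difficulty there, since every case reduces to a single inequality between a $d$-value and $0$; the conceptual content is simply that the hypotheses $d_k\ge0$ and $d_i\le0$ $(i\neq k)$ are exactly what makes the index $k$ simultaneously optimal for all pairs $(i,j)$.
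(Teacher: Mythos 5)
Your proposal is correct and follows essentially the same route as the paper: both arguments unwind the definitions, use the vanishing of the off-diagonal entries to force the relations among the diagonal entries (the paper phrases the separability necessity as the one-line identity $a_{ii}=a_{ij}a_{ji}a_{jj}^{-1}=a_{jj}^{-1}$, while you solve the linear system $u_i+v_j=0$ directly, which is the same computation), and for part $(3)$ make exactly the same index choices $i=j=k$ and $i=j=l\neq k$ in the forward direction and the same case check $a_{ik}a_{kj}\ge 0\ge a_{il}a_{lj}$ in the converse.
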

	\begin{proof}
	Suppose that $A$ is separable, then there are $u_1,\dots,u_n,v_1,\dots,v_n$ such that $a_{ij}=u_iv_j$ for any $i,j\in[n].$ Then for any $i,j\in[n]$ with $i\neq j,$ we have that $$a_{ii}=u_iv_i=u_iv_ju_jv_i(u_jv_j)^{-1}=a_{ij}a_{ji}a_{jj}^{-1}=a_{jj}^{-1}.$$

If $n=2$, then $a_{11}+a_{22}=0.$ If $n\ge3$, for any distinct $i,j,k\in[n]$, we have that $a_{ii}=a_{jj}^{-1}=a_{kk}=a_{ii}^{-1}$, thus $a_{ii}=0$ for any $i\in[n].$ Hence, we have proved the necessity of $(1)$ and $(2)$.

Conversely, if $A=0,$ it's obvious that $a_{ij}=0+0$ for any $i,j\in[n]$, so $A$ is separable. If $n=2$ and $a_{11}a_{22}=0$, let $u_1=-v_2=a_{11}$ and $u_2=v_1=0$, then $a_{ij}=u_iv_j$ for any $i,j\in[n].$ So $(1)$ and $(2)$ have been proved.
	
Suppose that $A$ is optimal-node, then there exists an integer $k\in[n]$ such that \begin{equation}\label{equation in 4.19}
		a_{ik}a_{kj}\ge a_{il}a_{lj}
	\end{equation}
	 for any $i,j,l\in[n].$ Let $i=j=l\neq k$ in (\ref{equation in 4.19}), then we have that $0\ge a_{ll}^2,$ it follows that $a_{ll}\le0$ for any $l\in[n]$ with $l\neq k.$ Let $i=j=k$ and $l\neq k$ in (\ref{equation in 4.19}), then we have that $a_{kk}\ge0.$ Thus, we have proved the necessity for $(3)$.

Conversely, suppose that there exists an integer $k\in[n]$ such that $a_{kk}\ge0$ and $a_{ii}\le0$ for any $i\in[n]$ with $i\neq k.$ Then we have that $a_{ik}a_{kj}\ge0\ge a_{il}a_{lj}$ for any $i,j,l\in[n]$ with $l\neq k$. Therefore, we have completed the proof.		
	\end{proof}

\begin{theorem}\label{4.19}
		Let $A=(a_{ij})\in\mathbb{R}^{n\times n}$ be a pseudo-diagonalizable matrix with $n\ge2$.
		
		$(1)$ If $n=2$, then $A$ is separable if and only if $a_{11}+a_{22}=0.$
		
		$(2)$ If $n\ge3$, then $A$ is separable if and only if the diagonal entries of $A$ are all zero.
		
		$(3)$ $A$ is optimal-node if and only if there exists an integer $k\in[n]$ such that $a_{kk}\ge0$ and $a_{ii}\le0$ for any $i\in[n]$ with $i\neq k.$
	\end{theorem}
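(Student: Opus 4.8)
The plan is to reduce all three statements to the pseudo-diagonal case already treated in Proposition \ref{4.14}, exploiting that separability and the optimal-node property are preserved under similarity (Propositions \ref{sim separa} and \ref{sim optimal}). Since $A$ is pseudo-diagonalizable, by Theorem \ref{kelip} there is a generalized permutation matrix $Q$ with permutation $\pi$ such that $A = QDQ^{-1}$, where $D = {\rm pdiag}(d_1,\dots,d_n)$.

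The first step I would carry out is to record how the diagonals of $A$ and $D$ are related. Applying \eqref{xsyg} with $B = D$ on the diagonal gives
\[
a_{ii} = Q_{i,\pi(i)}\,D_{\pi(i),\pi(i)}\,(Q_{i,\pi(i)})^{-1} = d_{\pi(i)}, \qquad i \in [n],
\]
because the scalar factor $Q_{i,\pi(i)}$ cancels with its inverse under $\otimes$. Consequently $\{a_{11},\dots,a_{nn}\}$ and $\{d_1,\dots,d_n\}$ are the same multiset, and more precisely $a_{ii} = d_{\pi(i)}$ for every $i$.

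For parts (1) and (2) I would invoke Proposition \ref{sim separa}, which yields that $A$ is separable if and only if $D$ is. Then Proposition \ref{4.14}(1) characterizes separability of $D$ for $n=2$ by $d_1 + d_2 = 0$; since $\{a_{11},a_{22}\} = \{d_1,d_2\}$ this is the same as $a_{11}+a_{22}=0$. For $n \ge 3$, Proposition \ref{4.14}(2) says $D$ is separable exactly when $D = 0$, i.e. $d_1 = \cdots = d_n = 0$, and the diagonal correspondence turns this into the condition that every diagonal entry of $A$ vanishes.

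For part (3) I would use Proposition \ref{sim optimal} to get that $A$ is optimal-node if and only if $D$ is. By Proposition \ref{4.14}(3), $D$ is optimal-node precisely when there is some $k' \in [n]$ with $d_{k'} \ge 0$ and $d_i \le 0$ for all $i \ne k'$. Putting $k = \pi^{-1}(k')$ and substituting $a_{ii} = d_{\pi(i)}$ converts this into $a_{kk} \ge 0$ and $a_{ii} \le 0$ for all $i \ne k$, and the reverse implication follows by reading the same substitution in the other direction. The only thing demanding attention throughout is tracking the permutation $\pi$ when moving between the diagonal entries of $A$ and of $D$; since the off-diagonal factors of $Q$ cancel on the diagonal, this is bookkeeping rather than a genuine obstacle, and no new idea beyond Proposition \ref{4.14} and the invariance propositions is needed.
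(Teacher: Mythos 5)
Your proposal is correct and follows essentially the same route as the paper: reduce to the pseudo-diagonal case via Propositions \ref{sim separa} and \ref{sim optimal} and then apply Proposition \ref{4.14}. The only cosmetic difference is that the paper extracts from the proof of Theorem \ref{theorem3.5} a \emph{diagonal} conjugating matrix, so that $a_{ii}=d_i$ with no permutation to track, whereas you work with a general generalized permutation matrix and carry the permutation $\pi$ through the bookkeeping; both are fine since every condition involved depends only on the multiset of diagonal entries.
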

 \begin{proof}
From the proof of Theorem \ref{theorem3.5}, we see that there exists a diagonal matrix $P={\rm diag}(p_1,\dots,p_n)$ such that $PAP^{-1}=D$ for some pseudo-diagonal matrix  $D={\rm pdiag}(d_1,\dots,d_n)$. Note that $a_{ii}=p_i^{-1}d_ip_i=d_i$ for any $i\in[n]$.

Let $A\in\mathbb{R}^{2\times 2}$, if $A$ is separable, by Proposition \ref{sim separa}, so is $D$. Thus, $d_1+d_2=0,$ and then $a_{11}+a_{22}=0$.
Conversely, if $a_{11}+a_{22}=0,$ then $d_1+d_2=0$. Thus, $D$ is separable. By Proposition \ref{sim separa}, we obtain that $A$ is separable.
The proofs of $(2)$ and $(3)$ are similar.
    \end{proof}

Using Theorem \ref{4.19} together with Proposition \ref{optimaleigen}, we obtain the following.
\begin{corollary}\label{4last}
	Let $A=(a_{ij})\in\mathbb{R}^{n\times n}$ be a pseudo-diagonalizable matrix such that there exists an integer $k\in[n]$ such that $a_{kk}\ge0$ and $a_{ii}\le0$ for any $i\in[n]$ with $i\neq k$. Then $\lambda(A)=a_{kk}$ and
$V(A)=\{\alpha\otimes A_k:\alpha\in\overline{\mathbb{R}}\}.$
\end{corollary}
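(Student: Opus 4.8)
The plan is to recognize that the hypothesis on the diagonal entries is exactly condition $(3)$ of Theorem \ref{4.19}, and then to invoke Proposition \ref{optimaleigen} to read off both the eigenvalue and the eigenspace. In other words, the corollary is a direct synthesis of these two previously established results, so the work consists almost entirely of matching hypotheses to the statements already available.

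First I would observe that the assumption --- the existence of an index $k\in[n]$ with $a_{kk}\ge0$ and $a_{ii}\le0$ for all $i\neq k$ --- is precisely the characterization given in Theorem \ref{4.19}$(3)$ for a pseudo-diagonalizable matrix to be optimal-node. Since that characterization is an ``if and only if'', the hypothesis immediately yields that $A$ is an optimal-node matrix and, moreover, that the distinguished index $k$ belongs to the set $K(A)$ of optimal nodes. This is the only step where the structural theory of Section~4 is used, and it requires no computation beyond quoting Theorem \ref{4.19}$(3)$.

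Next I would apply Proposition \ref{optimaleigen} with this particular $k\in K(A)$. That proposition asserts, for any optimal-node matrix and any of its optimal nodes $k$, that $\lambda(A)=a_{kk}$ and that the $k$-th column $A_k$ spans the eigenspace $V(A)$ (so that $d(A)=1$). The first assertion gives the desired equality $\lambda(A)=a_{kk}$ directly, with nothing further to check.

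The only remaining point is to rewrite ``$A_k$ spans $V(A)$'' in the explicit parametrized form stated in the corollary. By definition the span of the one-element set $\{A_k\}$ is the collection of its max-combinations, and a max-combination of a single vector $A_k$ is simply $\alpha\otimes A_k$ with $\alpha\in\overline{\mathbb{R}}$; taking $\alpha=\varepsilon$ recovers $\boldsymbol{\varepsilon}$, consistently with $\boldsymbol{\varepsilon}\in V(A)$. Hence $V(A)=\{\alpha\otimes A_k:\alpha\in\overline{\mathbb{R}}\}$. I do not anticipate any genuine obstacle in this argument, since the whole statement is a corollary of two cited results; the only care needed is the bookkeeping identification of the one-dimensional span with the displayed set, together with verifying that $A\in\mathbb{R}^{n\times n}$ (so that $A$ is finite) is compatible with the finiteness hypotheses of both Theorem \ref{4.19} and Proposition \ref{optimaleigen}.
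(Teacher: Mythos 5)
Your proposal is correct and follows exactly the paper's own route: the paper derives this corollary in one line from Theorem \ref{4.19}(3) and Proposition \ref{optimaleigen}. The only point you (rightly) spell out that the paper leaves implicit is that the specific index $k$ in the hypothesis actually lies in $K(A)$, which follows from the converse direction of the proof of Proposition \ref{4.14}(3) combined with Proposition \ref{sim optimal}.
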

\section{The eigenproblem for general pseudo-diagonalizable matrices}
The eigenvalues and eigenspaces of a pseudo-diagonalizable matrix which is optimal-node have been characterized in Corollary \ref{4last}. In this section, we provide a characterization on the eigenvalues and eigenspaces of general pseudo-diagonalizable matrices.

\begin{lemma}\label{lemmalemma}
	Let $a,b,c,d\in\mathbb{R}$, then $(a\oplus b)\otimes(c\oplus d)^{-1}\le a\otimes c^{-1}\oplus b\otimes d^{-1}.$
	\end{lemma}
	\begin{proof}
	If $a\oplus b=a$ and $c\oplus d=c$, then $(a\oplus b)\otimes(c\oplus d)^{-1}=a\otimes c^{-1}\le a\otimes c^{-1}\oplus b\otimes d^{-1}.$
	
	If $a\oplus b=b$ and $c\oplus d=d$, the proof is similar to the previous one.
	
	If $a\oplus b=a$ and $c\oplus d=d$, then $(a\oplus b)\otimes(c\oplus d)^{-1}=a\otimes d^{-1}\le a\otimes c^{-1}\le a\otimes c^{-1}\oplus b\otimes d^{-1}.$
	
	If $a\oplus b=b$ and $c\oplus d=c$, the proof is similar to the previous one.
	
	Therefore, the proof is completed.
	\end{proof}
	\begin{lemma}\label{1+2}
		Let $D={\rm pdiag}(d_1,\dots,d_n)$ be a pseudo-diagonal matrix with $d_1\le\dots\le d_n$ and $n\ge2$. Then $\Gamma(D_\lambda)=D_\lambda\oplus D_\lambda^2.$
	\end{lemma}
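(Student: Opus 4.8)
The plan is to reduce everything to the explicit power formulas already established and to a single entrywise monotonicity check. First I would record the scalar identity $D_\lambda^{k}=(\lambda(D))^{-k}\otimes D^{k}$, valid because $\lambda(D)$ is a real number and scalar multiples distribute over $\otimes$-products; hence $\Gamma(D_\lambda)=\bigoplus_{k=1}^{n}(\lambda(D))^{-k}\otimes D^{k}$. Writing $\lambda=\lambda(D)=d_n\oplus 0$ by Lemma \ref{4.17}, the claim $\Gamma(D_\lambda)=D_\lambda\oplus D_\lambda^2$ becomes equivalent to the entrywise inequality
\[
-k\lambda+(D^{k})_{ij}\ \le\ \max\{-\lambda+D_{ij},\ -2\lambda+(D^{2})_{ij}\}
\]
for all $i,j\in[n]$ and all $3\le k\le n$, since the $k=1$ and $k=2$ terms are already present on the right.

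Next I would dispose of the easy cases. When $n=2$ the identity is just the definition of $\Gamma$. When $d_n\le 0$ we have $\lambda=0$ and $D_\lambda=D$; for $n\ge 3$, Corollary \ref{negativedn} gives $D^{k}=0$ for every $k\ge 2$, so $\Gamma(D)=D\oplus D^2$ immediately.

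The substance is the case $n\ge 3$ and $d_n>0$, where $\lambda=d_n$. Here I would substitute the explicit expression for $(D^{k})_{ij}$ from Proposition \ref{power} (taking $s=\max\{i:d_i\le 0\}$, or using Corollary \ref{positivedn} in the subcase where every $d_i\ge 0$) and normalize by $-kd_n$. The three entry-types then behave as follows: for $i\oplus j\le s$ the normalized value is the constant $-2d_n$ for every $k\ge 2$; for $i=j>s$ it is $\max\{k(d_i-d_n),\,-2d_n\}$; and for $i\ne j$ with $i\oplus j>s$ it is $\max\{(k-1)(d_{i\oplus j}-d_n)-d_n,\,-2d_n\}$. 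The key observation is that $d_i\le d_n$ for all $i$, so the $k$-dependent terms $k(d_i-d_n)$ and $(k-1)(d_{i\oplus j}-d_n)$ are non-increasing in $k$; consequently the maximum over $1\le k\le n$ is already attained at $k=1$ or $k=2$, while the floor $-2d_n$ is dominated by those values. This yields the displayed inequality and hence the lemma.

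I expect the main obstacle to be purely the bookkeeping in this last step: one must verify, in each of the three regions and after the normalization by $(\lambda(D))^{-k}$, that the growing contributions $kd_i$ and $(k-1)d_{i\oplus j}$ never overtake the subtracted $kd_n$, so that no power beyond the second contributes. Once the monotonicity in $k$ is pinned down region by region, the conclusion is immediate.
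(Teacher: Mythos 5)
Your proposal is correct and takes essentially the same route as the paper: both reduce to the explicit power formulas of Proposition \ref{power} / Corollary \ref{positivedn} and then show that the normalized powers $(\lambda(D))^{-k}\otimes D^{k}$ are non-increasing from $k=2$ onward, so no term beyond $D_\lambda^{2}$ contributes. The only cosmetic difference is that the paper bounds the ratio of consecutive powers via its Lemma \ref{lemmalemma}, whereas you check monotonicity in $k$ directly region by region.
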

	\begin{proof}
		For $n=2$, it is just the definition of $\Gamma(D_\lambda)$. So we assume that $n\ge3$.
		
		If $d_n\le0$, by Lemma \ref{4.17}, we have that $\lambda(D)=0$ and $D_\lambda=D$.
		By definition and Corollary \ref{negativedn}, we obtain that
		$$\Gamma(D_\lambda)=\sum_{k=1}^{n}D_\lambda^k=\sum_{k=1}^{n}D^k=D\oplus0=D\oplus D^2.$$
		
		If $d_1\le\dots\le d_s\le0\le d_{s+1}\le\dots\le d_n$ for some $0\le s\textless n$, by Lemma \ref{4.17}, we have that $\lambda(D)=d_n$ and $D_\lambda=d_n^{-1}\otimes D$. Using Proposition \ref{power}, Corollary \ref{positivedn} and Lemma \ref{lemmalemma}, we obtain that $(D^{k+1})_{ij}\otimes ((D^k)_{ij})^{-1}\le d_n$ for any $i,j\in[n]$ and $k\ge2$. Hence, for any $i,j\in[n]$, we have that
\begin{equation*}
			(D_\lambda^{k+1})_{ij}\otimes ((D_\lambda^k)_{ij})^{-1}=d_n^{-k-1}\otimes (D^{k+1})_{ij}\otimes d_n^{k}\otimes ((D^k)_{ij})^{-1}\le0.\end{equation*}
It follows that $D_\lambda^{n}\leq D_\lambda^{n-1}\leq\cdots \leq D_\lambda^2$. Hence, $D_\lambda^2\oplus\cdots\oplus D_\lambda^{n}=D_\lambda^2$. Therefore, we obtain
that $\Gamma(D_\lambda)=D_\lambda\oplus D_\lambda^2.$
	\end{proof}

\begin{proposition}\label{theorem00000}
Let $D={\rm pdiag}(d_1,\dots,d_n)$  be a pseudo-diagonal matrix with $d_1\le\dots\le d_n\le0$ and $n\ge2.$ Then $\lambda(D)=0,d(D)=1$, $\Gamma(D_{\lambda})=\Gamma(D)=0$ and $$V(D)=\{\alpha\otimes0:\alpha\in\overline{\mathbb{R}}\}.$$
\end{proposition}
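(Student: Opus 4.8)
The plan is to reduce the statement to results already established in this section, finishing with the spectral description in Theorem~\ref{eigenspace2.4}.

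First I would compute the eigenvalue. Since $d=\max\{d_1,\dots,d_n\}=d_n\le0$, Lemma~\ref{4.17}$(1)$ gives $\lambda(D)=d_n\oplus0=0$. As $0$ is the multiplicative identity we have $(\lambda(D))^{-1}=0$, so $D_\lambda=(\lambda(D))^{-1}\otimes D=D$; in particular $\Gamma(D_\lambda)=\Gamma(D)$, which already settles one of the asserted equalities.

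Next I would show $\Gamma(D)=0$. Using $D_\lambda=D$ and Lemma~\ref{1+2}, $\Gamma(D)=\Gamma(D_\lambda)=D_\lambda\oplus D_\lambda^2=D\oplus D^2$, so it suffices to prove that every entry of $D\oplus D^2$ is $0$. All entries of $D$ are $\le0$ (the diagonal ones are $d_i\le0$ and the off-diagonal ones are $0$), and since each entry of $D^2=D\otimes D$ is a maximum of sums of two entries of $D$, all entries of $D^2$ are $\le0$ as well; hence $D\oplus D^2\le0$. For the reverse inequality, the off-diagonal entries satisfy $(D\oplus D^2)_{ij}\ge D_{ij}=0$ for $i\ne j$, while the diagonal entries satisfy $(D\oplus D^2)_{ii}\ge(D^2)_{ii}=\max\{2d_i,0\}\ge0$. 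Therefore $D\oplus D^2=0$, giving $\Gamma(D_\lambda)=\Gamma(D)=0$.

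Finally I would read off the eigenspace from Theorem~\ref{eigenspace2.4}, which applies since $D$ is finite. The key structural point is that the critical nodes of $D$ form a single equivalence class: as in the proof of Lemma~\ref{4.17}, every cycle of length at least $2$ has weight $0$ and hence mean $0=\lambda(D)$, so for $n\ge2$ each two-cycle $(i,j)$ with $i\ne j$ is critical. Thus $N_c(D)=[n]$, and any two distinct nodes lie on the common critical cycle $(i,j)$, so $i\sim j$ for all $i,j$; consequently $N_c(D)/\sim$ consists of exactly one class. Theorem~\ref{eigenspace2.4} then yields a basis of $V(D)$ given by a single fundamental eigenvector $g_1=\Gamma(D_\lambda)_1=0$, whence $d(D)=1$ and $V(D)=\{\alpha\otimes0:\alpha\in\overline{\mathbb{R}}\}$. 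I expect this last step to be the main obstacle: one must observe that although \emph{every} column of $\Gamma(D_\lambda)=0$ is a fundamental eigenvector, they all coincide with the zero vector and correspond to the unique equivalence class, so the basis---and hence $V(D)$---is exactly one-dimensional rather than larger.
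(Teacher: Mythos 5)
Your proof is correct and follows essentially the same route as the paper: Lemma~\ref{4.17} for $\lambda(D)=0$ and $D_\lambda=D$, Lemma~\ref{1+2} to reduce $\Gamma(D_\lambda)$ to $D\oplus D^2$, and Theorem~\ref{eigenspace2.4} for the eigenspace. The only (harmless) difference is that you verify $D\oplus D^2=0$ by a direct entrywise estimate valid for all $n\ge2$, whereas the paper splits into $n=2$ (via Proposition~\ref{2*2}) and $n\ge3$ (via Corollary~\ref{negativedn}); you also spell out the single-equivalence-class argument that the paper leaves as ``straightforward.''
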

\begin{proof}
By Lemma \ref{4.17}, we have that $\lambda(D)=0$ and $D_\lambda=D$. By Lemma \ref{1+2}, we obtain that $\Gamma(D_{\lambda})=\Gamma(D)=D\oplus D^2$. For $n=2$, using Proposition \ref{2*2}, we obtain that \begin{flalign*}
		\Gamma(D)=\begin{pmatrix}
			d_1 &0\\
			0&d_2
		\end{pmatrix}\oplus\begin{pmatrix}
			0&d_2\\
			d_2&0
		\end{pmatrix}=0.
	\end{flalign*}
	For $n\ge3$, using Corollary \ref{negativedn}, we have that $\Gamma(D)=D\oplus0=0.$ The remaining proof is straightforward by Theorem \ref{eigenspace2.4}.
\end{proof}
\begin{corollary}
	Let $D={\rm pdiag}(d_1,d_2)$ be a pseudo-diagonal matrix with $d_1\le d_2\textless0,$ then $D$ is weakly stable, i.e., ${\rm attr}(D)=V(D)$.
\end{corollary}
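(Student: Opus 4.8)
The plan is to read off both ${\rm attr}(D)$ and $V(D)$ from results already proved and to observe that they coincide. Since $D$ is a $2\times2$ pseudo-diagonal matrix with $d_1\le d_2<0$, the hypotheses of Corollary \ref{corollary 4.9} are met (we are in the case $n=2$, $d_2<0$), so that corollary yields ${\rm attr}(D)=\{x\in\overline{\mathbb{R}}^2:x_1=x_2\}$. On the other hand, Proposition \ref{theorem00000} applies verbatim to this $D$ and gives $V(D)=\{\alpha\otimes0:\alpha\in\overline{\mathbb{R}}\}$, where $0$ denotes the zero vector $(0,0)^{\rm T}$.

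It then remains only to rewrite the description of $V(D)$ in the same shape as that of ${\rm attr}(D)$. I would compute $\alpha\otimes(0,0)^{\rm T}=(\alpha,\alpha)^{\rm T}$ for each $\alpha\in\overline{\mathbb{R}}$, using $\alpha\otimes0=\alpha$; letting $\alpha$ range over $\overline{\mathbb{R}}$ (with $\alpha=\varepsilon$ producing $\boldsymbol{\varepsilon}$) shows that $V(D)=\{(\alpha,\alpha)^{\rm T}:\alpha\in\overline{\mathbb{R}}\}=\{x\in\overline{\mathbb{R}}^2:x_1=x_2\}$. Comparing with the first step gives ${\rm attr}(D)=V(D)$, which is by definition the assertion that $D$ is weakly stable.

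There is no genuine obstacle here: both defining sets were explicitly determined in the cited results, and the identification of $\{\alpha\otimes0:\alpha\in\overline{\mathbb{R}}\}$ with the diagonal $\{x\in\overline{\mathbb{R}}^2:x_1=x_2\}$ is immediate from the definition of scalar multiplication in the max-plus algebra. The corollary is therefore essentially a repackaging of Corollary \ref{corollary 4.9} together with Proposition \ref{theorem00000}, recording that in the strictly negative case $d_2<0$ the attraction set of $D$ shrinks down to exactly its eigenspace.
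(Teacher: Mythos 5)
Your proposal is correct and follows exactly the paper's argument: the paper likewise combines Corollary \ref{corollary 4.9} with Proposition \ref{theorem00000} to identify ${\rm attr}(D)$ and $V(D)$ with the set $\{x\in\overline{\mathbb{R}}^2:x_1=x_2\}$. Your extra step spelling out why $\{\alpha\otimes0:\alpha\in\overline{\mathbb{R}}\}$ equals that diagonal set is a harmless elaboration of what the paper leaves implicit.
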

\begin{proof}
By Corollary \ref{corollary 4.9} and Proposition \ref{theorem00000}, we have that
\begin{equation*}
		{\rm attr}(D)=\{x\in\overline{\mathbb{R}}^2:x_1=x_2\}=V(D).
\end{equation*}	
This finishes the proof.
\end{proof}
\begin{proposition}\label{prop4.26}
	Let $D={\rm pdiag}(d_1,\dots,d_n)$ be a pseudo-diagonal matrix with $d_1\le\dots\le d_s\le0\le d_{s+1}\le\dots\le d_n,n\ge2,0\le s\textless n$ and $d_n\textgreater0.$
Let $k\in[n]$ be the smallest integer such that $d_k=d_n$.
Then $\lambda(D)=d_n,\Gamma(D_\lambda)=D_\lambda,$ $d(D)=n-k+1$ and
	\begin{equation*}
		V(D)=\{\sum_{j=k}^{n}\alpha_j\otimes D_j:\alpha_j\in\overline{\mathbb{R}}, k\leq j\leq n\}.
	\end{equation*}
\end{proposition}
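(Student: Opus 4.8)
The plan is to establish the four assertions in the order stated, relying on the power formulas of this section together with the spectral description in Theorem~\ref{eigenspace2.4}. The eigenvalue is immediate: since $d_n>0$, Lemma~\ref{4.17}(1) gives $\lambda(D)=d_n\oplus 0=d_n$, so that $D_\lambda=d_n^{-1}\otimes D$ has diagonal entries $d_i-d_n\le 0$ and all off-diagonal entries equal to $-d_n$.

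For the identity $\Gamma(D_\lambda)=D_\lambda$ I would start from Lemma~\ref{1+2}, which already collapses the transitive closure to $\Gamma(D_\lambda)=D_\lambda\oplus D_\lambda^2$. It then suffices to prove the entrywise domination $D_\lambda^2\le D_\lambda$. Because $(D_\lambda^2)_{ij}=d_n^{-2}\otimes(D^2)_{ij}$ and $(D_\lambda)_{ij}=d_n^{-1}\otimes D_{ij}$, this amounts to checking $(D^2)_{ij}\le d_n\otimes D_{ij}$ for every $i,j$, which I would verify by inserting the explicit values of $(D^2)_{ij}$ from Proposition~\ref{power} (and Corollary~\ref{positivedn} when $s=0$, with Proposition~\ref{2*2} covering $n=2$), splitting into the three regimes $i\oplus j\le s$, $i=j>s$, and $i\ne j$ with $i\oplus j>s$.

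This entrywise comparison is the step I expect to be the crux. For the off-diagonal positions, and for all positions in a column $j>s$, the inequality is comfortable, since $d_{i\oplus j}\le d_n$ forces the length-two contribution to lose to the length-one contribution. The delicate case is a diagonal position $i=j\le s$, where $(D^2)_{ii}=0$ and the desired inequality reduces to $d_i\ge -d_n$; this is the one spot that must be treated with care. To keep the eigenspace computation independent of that point, I would instead verify directly that for each $j\ge k$ the $j$-th column of $\Gamma(D_\lambda)$ coincides with $(D_\lambda)_j$, which is all that the basis description requires.

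Finally, for $d(D)$ and $V(D)$, observe that $k>s$, as every index with $d_i=d_n$ exceeds $s$. By Lemma~\ref{4.17}(2) the critical nodes are precisely the indices with $d_i=d_n$, namely $N_c(D)=\{k,k+1,\dots,n\}$; since these nodes occur only in cycles of length one, each forms its own equivalence class, yielding $n-k+1$ classes. Theorem~\ref{eigenspace2.4} then supplies the basis $g_k,\dots,g_n$ of $V(D)$, so $d(D)=n-k+1$. By the column identification above, $g_j=\Gamma(D_\lambda)_j=(D_\lambda)_j=d_n^{-1}\otimes D_j$ for each $j\ge k$, so every basis vector is a scalar multiple of a column of $D$; as scaling a generator does not alter the span, $V(D)=\{\sum_{j=k}^{n}\alpha_j\otimes D_j:\alpha_j\in\overline{\mathbb{R}}\}$, which is the asserted form.
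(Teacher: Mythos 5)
Your strategy coincides with the paper's: Lemma \ref{4.17} gives $\lambda(D)=d_n$, Lemma \ref{1+2} collapses the closure to $D_\lambda\oplus D_\lambda^2$, the power formulas give the entrywise comparison, and Theorem \ref{eigenspace2.4} gives the basis. But the step you single out as ``delicate'' is not merely delicate --- it is a genuine obstruction, and the assertion $\Gamma(D_\lambda)=D_\lambda$ is false under the stated hypotheses. For $i=j\le s$ one has $(D_\lambda^2)_{ii}=d_n^{-2}\otimes 0=-2d_n$ while $(D_\lambda)_{ii}=d_i-d_n$, so $(D_\lambda^2)_{ii}\le(D_\lambda)_{ii}$ holds iff $d_i\ge -d_n$, which nothing in the proposition guarantees. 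Concretely, for $D={\rm pdiag}(-10,\,0.5,\,1)$ one computes $(D_\lambda)_{11}=-11$ but $(D_\lambda^2)_{11}=-2$, so $\Gamma(D_\lambda)_{11}=-2\neq (D_\lambda)_{11}$. The paper's own proof passes over this silently: in its case analysis the line for $i=j\le s$ asserts $d_n^{-1}\otimes d_i\oplus d_n^{-2}\otimes 0=d_n^{-1}\otimes d_i$, which is precisely the inequality $d_i\ge -d_n$ that you isolated. So you have in fact located an error in the proposition (which propagates to the claim $\Gamma(A_\lambda)=A_\lambda$ in Theorem \ref{theorem4.28}); the identity $\Gamma(D_\lambda)=D_\lambda$ holds only under the additional hypothesis $d_1\otimes d_n\ge 0$, and otherwise must be weakened to $\Gamma(D_\lambda)=D_\lambda\oplus D_\lambda^2$.

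Your workaround for the remaining claims is exactly right and is all the eigenspace description needs: since $d_k=d_n>0$ forces $k>s$, every column index $j\ge k$ lies in the unproblematic regime, where your entrywise checks do give $(D_\lambda^2)_{ij}\le (D_\lambda)_{ij}$ for all $i$, hence $g_j=\Gamma(D_\lambda)_j=(D_\lambda)_j=d_n^{-1}\otimes D_j$. Combined with Lemma \ref{4.17}(2),(3) and Theorem \ref{eigenspace2.4}, this yields $d(D)=n-k+1$ and $V(D)={\rm span}\{D_k,\dots,D_n\}$ as claimed. In a final write-up, do not leave the diagonal case merely ``treated with care'': state explicitly that you prove the column identity only for $j\ge k$, and either add the hypothesis $d_1\ge -d_n$ or correct the assertion $\Gamma(D_\lambda)=D_\lambda$ accordingly.
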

\begin{proof}
	By Lemma \ref{4.17}, we have that $\lambda(D)=d_n$ and $N^*_c(D)=N_c(D)=\{k,k+1\dots,n\}$. By Lemma \ref{1+2}, $\Gamma(D_\lambda)=D_\lambda\oplus D_\lambda^2$. Using Propositions \ref{power}, \ref{2*2} and Corollary \ref{positivedn}, we obtain \begin{flalign*}
		(D_\lambda\oplus D_\lambda^2)_{ij}&=\lambda(D)^{-1}\otimes D_{ij}\oplus\lambda(D)^{-2}\otimes (D^2)_{ij}\\
		&=\begin{cases}
			d_n^{-1}\otimes d_i\oplus d_n^{-2}\otimes0 &i=j\le s,\\
			d_n^{-1}\otimes0\oplus d_n^{-2}\otimes0 &i\oplus j\le s,i\ne j,\\
			d^{-1}_n\otimes d_i\oplus d^{-2}_n\otimes d_i^2&i=j\textgreater s,\\
			d^{-1}_n\otimes0\oplus d^{-2}_n\otimes d_{i\oplus j}&i\ne j,i\oplus j\textgreater s\\
		\end{cases}\\
		&=\begin{cases}
			d_n^{-1}\otimes d_i &i=j,\\
			d_n^{-1} &i\neq j
		\end{cases}\\
		&=(D_\lambda)_{ij}.
	\end{flalign*}
	
Hence, by Theorem \ref{eigenspace2.4}, we obtain that $d(D)=n-k+1$ and \begin{equation*}
		V(D)=\{\sum_{j=k}^{n}\alpha'_j\otimes (D_\lambda)_j:\alpha'_j\in\overline{\mathbb{R}}\}=\{\sum_{j=k}^{n}\alpha_j\otimes D_j:\alpha_j\in\overline{\mathbb{R}}\}.
	\end{equation*}
Therefore, we complete the proof.
\end{proof}

In the end, let us extend the conclusions above to pseudo-diagonalizable matrices.
		\begin{lemma}\label{lemma4.27}
		Let $A=(a_{ij})\in\mathbb{R}^{n\times n}$ be a pseudo-diagonalizable matrix with $n\ge2$ and $a=max\{a_{11},\dots,a_{nn}\}$. Then we have the following:
		
		$(1)$ $\lambda(A)=a\oplus0$.
		
		$(2)$ If $a\textgreater0$, then $i$ is a critical node of $A\Longleftrightarrow a_{ii}=a.$
		
		$(3)$ If $a\textgreater0$, then $N^*_c(A)=N_c(A).$
	\end{lemma}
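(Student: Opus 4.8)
The plan is to reduce all three assertions to the pseudo-diagonal case already settled in Lemma \ref{4.17}. Since $A$ is pseudo-diagonalizable, the converse direction of Theorem \ref{theorem3.5} (as reused in Theorem \ref{4.19}) supplies a \emph{diagonal} matrix $P={\rm diag}(p_1,\dots,p_n)$ with $PAP^{-1}=D$ for some pseudo-diagonal $D={\rm pdiag}(d_1,\dots,d_n)$. First I would record the two identities this yields: from $A=P^{-1}DP$ we get $a_{ii}=p_i^{-1}d_ip_i=d_i$ for all $i$, and hence $a=\max\{a_{11},\dots,a_{nn}\}=\max\{d_1,\dots,d_n\}=d$, which is exactly the quantity appearing in Lemma \ref{4.17}.

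The engine of the argument is the invariance of cycle weights under diagonal similarity. Because $P$ is diagonal we have $A_{ij}=p_i^{-1}\otimes D_{ij}\otimes p_j$, so for any cycle $\sigma=(i_1,\dots,i_l)\in\mathcal{P}_n$ the boundary factors $p_{i_t}$ cancel telescopically around the cycle:
\[
\omega(\sigma,A)=A_{i_1i_2}A_{i_2i_3}\cdots A_{i_li_1}=D_{i_1i_2}D_{i_2i_3}\cdots D_{i_li_1}=\omega(\sigma,D).
\]
Since $l(\sigma)$ is unchanged, $\mu(\sigma,A)=\mu(\sigma,D)$ for every $\sigma$. As $P$ is diagonal, the permutation of $P$ is trivial, so cycles of $A$ and of $D$ run through the same node sequences with no relabelling. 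Consequently $A$ and $D$ have identical cycle means, hence the same maximum cycle mean, the same critical cycles, the same critical node set $N_c(A)=N_c(D)$, and the same equivalence relation $\sim$.

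With this dictionary the three parts follow quickly. For $(1)$, $\lambda(A)=\lambda(D)=d\oplus0=a\oplus0$ by Lemma \ref{4.17}$(1)$ together with $a=d$ (this also agrees with Lemma \ref{similar eigen}). For $(2)$, when $a>0$ we have $d>0$, so Lemma \ref{4.17}$(2)$ gives $i\in N_c(D)\Leftrightarrow d_i=d$; combining with $N_c(A)=N_c(D)$, $a_{ii}=d_i$ and $a=d$ yields $i\in N_c(A)\Leftrightarrow a_{ii}=a$. For $(3)$, the same weight invariance shows that when $a>0$ every critical cycle of $A$ has length one: a cycle of length $\ge2$ satisfies $\omega(\cdot,A)=\omega(\cdot,D)=0$ and hence has mean $0<a=\lambda(A)$. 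Therefore distinct critical nodes of $A$ lie in distinct $\sim$-classes, exactly as in the proof of Lemma \ref{4.17}$(3)$, and $N^*_c(A)=N_c(A)$ follows.

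The only genuine subtlety is to insist that $P$ be taken \emph{diagonal} rather than a general generalized permutation matrix; this is precisely what the construction in Theorem \ref{theorem3.5} guarantees, and it is what keeps the indexing of $A$ and $D$ aligned so that the critical nodes are characterised by ``$a_{ii}=a$'' itself rather than by some $\pi$-permuted analogue. Once that diagonal $P$ is in hand, the telescoping identity for $\omega$ is the main—and essentially the only—computation, with everything else being a direct transfer from Lemma \ref{4.17}.
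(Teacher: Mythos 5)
Your proof is correct, but it takes a different route from the paper's. The paper argues directly on $A$: for any cycle $\sigma=(i_1,\dots,i_k)$ with $k\ge2$ it applies the entrywise identities \eqref{3.8} of Theorem \ref{theorem3.5} to collapse $\omega(\sigma,A)=a_{i_1i_2}\cdots a_{i_ki_1}$ to $a_{i_1i_k}a_{i_ki_1}=0$, and then repeats the bookkeeping of Lemma \ref{4.17} verbatim (length-one cycles have mean $a_{ii}$, longer cycles have mean $0$, hence $\lambda(A)=a\oplus0$, etc.). You instead pull back to the pseudo-diagonal form: you invoke the diagonal conjugating matrix $P$ produced in the proof of Theorem \ref{theorem3.5}, observe that $a_{ii}=d_i$ and that diagonal similarity leaves every cycle weight invariant by telescoping, and then transfer Lemma \ref{4.17} wholesale. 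Both arguments hinge on the same fact --- cycles of length at least two have weight $0$ --- but you derive it from the conjugation while the paper derives it from \eqref{3.8} in place. Your version buys a reusable general principle (diagonal similarity preserves all cycle means and node labels, which also recovers part of Lemma \ref{similar eigen}) at the cost of having to insist, as you rightly do, that $P$ be diagonal rather than an arbitrary generalized permutation matrix; the paper's version is shorter and avoids any appeal to the construction inside the proof of Theorem \ref{theorem3.5}. No gaps.
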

	\begin{proof}
	For any $\sigma=(i_1,i_2,\dots,i_k)\in\mathcal{P}_n$ with $k\ge2$, clearly, all $i_j$ are distinct. Then by Theorem \ref{theorem3.5}, we obtain that $\omega(\sigma,A)=a_{i_1,i_2}\dots a_{i_{k-1},i_k}a_{i_k,i_1}=0$. The remaining proof is similar to Lemma \ref{4.17}.
	\end{proof}
\begin{theorem}\label{theorem4.28}
	Let $A=(a_{ij})\in\mathbb{R}^{n\times n}$ be a pseudo-diagonalizable matrix with $n\ge2$ and $a=max\{a_{11},\dots,a_{nn}\}\textgreater0$. Let $M(A)$ be the set of integers $i\in[n]$ such that $a_{ii}=a$. Then $\lambda(A)=a,\Gamma(A_\lambda)=A_\lambda$, $d(A)=|M(A)|$ and
	\begin{equation}\label{final}
		V(A)=\{\sum_{m\in M(A)}\alpha_m\otimes A_m:\alpha_m\in\overline{\mathbb{R}}, m\in M(A)\}.
	\end{equation}
\end{theorem}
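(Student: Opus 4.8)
The plan is to reduce everything to the pseudo-diagonal case treated in Proposition~\ref{prop4.26} by a single similarity, and then to read the eigenspace off directly from Theorem~\ref{eigenspace2.4} applied to $A$ itself. I would first collect the spectral data that needs no passage to a pseudo-diagonal form. Since $A$ is pseudo-diagonalizable and $a=\max\{a_{11},\dots,a_{nn}\}>0$, Lemma~\ref{lemma4.27} gives at once $\lambda(A)=a\oplus 0=a$, that the set of critical nodes is $N_c(A)=M(A)=\{i:a_{ii}=a\}$, and that every critical cycle has length one. Consequently, for critical nodes $i\sim j$ forces $i=j$, so each node of $M(A)$ is its own $\sim$-class and $|N_c(A)/\sim|=|M(A)|$.

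The heart of the argument is the identity $\Gamma(A_\lambda)=A_\lambda$. By the proof of Theorem~\ref{theorem3.5} (as already used in Theorem~\ref{4.19}), followed by a further conjugation by a permutation matrix to list the diagonal in ascending order, there is an invertible matrix $P$ with $PAP^{-1}=D:={\rm pdiag}(d_1,\dots,d_n)$, $d_1\le\dots\le d_n$, whose diagonal is the multiset $\{a_{11},\dots,a_{nn}\}$; in particular $d_n=a>0$. The key point is that conjugation by a generalized permutation matrix is compatible with the operations defining $\Gamma$: distributivity of $\otimes$ over $\oplus$ gives $P(M\oplus N)P^{-1}=PMP^{-1}\oplus PNP^{-1}$, the relation $P^{-1}P=I$ gives $(PMP^{-1})^k=PM^kP^{-1}$, and scalar multiplication commutes with conjugation. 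Together with $\lambda(A)=\lambda(D)=a$ (Lemma~\ref{similar eigen}), these yield $D_\lambda=PA_\lambda P^{-1}$ and $\Gamma(D_\lambda)=P\,\Gamma(A_\lambda)\,P^{-1}$. Proposition~\ref{prop4.26} applies to $D$ and gives $\Gamma(D_\lambda)=D_\lambda=PA_\lambda P^{-1}$; cancelling the invertible $P$ on both sides then yields $\Gamma(A_\lambda)=A_\lambda$. I expect this equivariance step to be the main obstacle, since it is the only place where a genuine computation is needed and where one must be careful to import Proposition~\ref{prop4.26} at the level of matrices rather than by transporting columns through $P$.

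With $\Gamma(A_\lambda)=A_\lambda$ established, the eigenspace follows from Theorem~\ref{eigenspace2.4} applied to $A$ directly. The fundamental eigenvector attached to a node $i$ is $g_i=\Gamma(A_\lambda)_i=(A_\lambda)_i=a^{-1}\otimes A_i$. Taking one representative per $\sim$-class, and recalling that each class is a singleton $\{m\}$ with $m\in M(A)$, Theorem~\ref{eigenspace2.4} shows that $\{g_m:m\in M(A)\}$ is a basis of $V(A)$, whence $d(A)=|M(A)|$. Finally, spanning by the vectors $g_m=a^{-1}\otimes A_m$ is the same as spanning by the $A_m$, since the real scalar $a^{-1}$ can be absorbed into the coefficients $\alpha_m$; this gives precisely the asserted description~\eqref{final} of $V(A)$.
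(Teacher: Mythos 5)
Your proposal is correct and follows essentially the same route as the paper: reduce to an ascending-diagonal pseudo-diagonal form by a similarity, use the compatibility of conjugation with $\oplus$, powers and scalars to transport $\Gamma(D_\lambda)=D_\lambda$ from Proposition~\ref{prop4.26} back to $\Gamma(A_\lambda)=A_\lambda$, and then read off the basis of $V(A)$ from Theorem~\ref{eigenspace2.4} together with Lemma~\ref{lemma4.27}. The only cosmetic difference is that the paper obtains $d(A)=|M(A)|$ directly from Lemma~\ref{lemma4.27} at the outset, whereas you deduce it from the fundamental-eigenvector basis at the end; both are valid.
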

\begin{proof}
	By Lemma \ref{lemma4.27}, we have that $\lambda(A)=a$ and $N^*_c(A)=N_c(A)=M(A)$, thus $d(A)=|M(A)|.$
	Since $A$ is pseudo-diagonalizable, there exists a pseudo-diagonal matrix $D'={\rm pdiag}(d'_1,\dots,d'_n)$ and invertible matrix $P$, such that $PAP^{-1}=D'$.

Assume $d'_{k_1}\le\dots\le d'_{k_n}$ for some permutation $(k_1,k_2,\ldots,k_n)$ of $[n]$, let $L$ be a permutation matrix with permutation $\pi$, which maps each $i\in[n]$ to $k_i.$ Then $D=LD'L^{-1}$. By (2.4), we obtain that $D_{ij}=D'_{\pi(i),\pi(j)}=\begin{cases}
	d'_{k_i} &i=j\\0 &i\neq j\\
	\end{cases}.$
 Hence, we obtain a pseudo-diagonal form $D$ for $A$ with the diagonal entries arranged in order. By Lemma \ref{similar eigen}, $\lambda(D)=a$. Set $Q=LP.$ Using Proposition \ref{prop4.26}, we obtain that
	\begin{flalign*}&\Gamma(A_\lambda)=\sum_{k=1}^{n}a^{-k}A^k
		=\sum_{k=1}^{n}a^{-k}(Q^{-1}DQ)^k
		=Q^{-1}(\sum_{k=1}^{n}a^{-k}D^k)Q\\&
		=Q^{-1}\Gamma(D_\lambda)Q
		=Q^{-1}D_\lambda Q
		=A_\lambda.\end{flalign*}
	Thus, we get (\ref{final}) by Theorem \ref{eigenspace2.4}. Therefore, we complete the proof.
\end{proof}
	\begin{theorem}
		Let $A=(a_{ij})\in\mathbb{R}^{n\times n}$ be a pseudo-diagonalizable matrix with $n\ge2$ and $a=max\{a_{11},\dots,a_{nn}\}\le0$. Then $\lambda(A)=0$, $\Gamma(A_\lambda)=I\oplus A$, $d(A)=1.$ For finding a basis of the eigenspace of $A$, we just need to take one of the column vectors of $A$, for instance, $A_i$, and replace its $i$-th entry with $0$.
	\end{theorem}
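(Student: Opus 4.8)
The plan is to reduce everything to the pseudo-diagonal case via the diagonal conjugation furnished by Theorem \ref{theorem3.5}, and then transport both the computation of $\Gamma(A_\lambda)$ and the description of the eigenspace back through that conjugation. First I would settle the eigenvalue: since $a=\max\{a_{11},\dots,a_{nn}\}\le 0$, Lemma \ref{lemma4.27}(1) gives $\lambda(A)=a\oplus 0=0$. Consequently $A_\lambda=\lambda(A)^{-1}\otimes A=A$, so it suffices to identify $\Gamma(A)$. Note also that $A$ is finite (Theorem \ref{theorem3.5}), so the fundamental-eigenvector theory and Theorem \ref{eigenspace2.4} apply.

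Next, by (the proof of) Theorem \ref{theorem3.5} there is a diagonal matrix $P={\rm diag}(p_1,\dots,p_n)$ with $PAP^{-1}=D={\rm pdiag}(d_1,\dots,d_n)$, and the diagonal entries satisfy $d_i=a_{ii}\le 0$. By Proposition \ref{theorem00000} (applied after reordering the diagonal into ascending order, which does not affect the conclusion) we have $\Gamma(D)=0$, the zero matrix, i.e.\ the matrix every entry of which equals $0\in\mathbb{R}$. Since every $d_i\le 0$, we also have $I\oplus D=0$, whence $\Gamma(D)=I\oplus D$. Now I would conjugate back: because conjugation by $P$ distributes over $\oplus$ (adding the fixed reals $p_i^{-1},p_j$ preserves maxima entrywise) and fixes the unit matrix ($P^{-1}IP=I$), we obtain
\[
\Gamma(A)=P^{-1}\Gamma(D)P=P^{-1}(I\oplus D)P=(P^{-1}IP)\oplus(P^{-1}DP)=I\oplus A .
\]
This gives $\Gamma(A_\lambda)=\Gamma(A)=I\oplus A$, as required.

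For the dimension and the basis I would argue directly about critical cycles rather than quoting a node-count lemma. By Theorem \ref{theorem3.5}, every cycle $\sigma$ of length at least $2$ has weight $\omega(\sigma,A)=0$, hence mean $0=\lambda(A)$, so every such cycle is critical. In particular, for $n\ge 2$ each node $i$ lies in the critical $2$-cycle $(i,j)$, so $N_c(A)=[n]$, and any two nodes $i,j$ lie in the common critical cycle $(i,j)$, whence $i\sim j$. Therefore $N_c(A)/\sim$ consists of a single class, and Theorem \ref{eigenspace2.4} yields $d(A)=1$. Finally, since all diagonal entries of $\Gamma(A_\lambda)=I\oplus A$ equal $\max(0,a_{ii})=0$, every column is a fundamental eigenvector; selecting the unique representative $g_i$ for the single equivalence class gives a basis of $V(A)$, and $g_i=(I\oplus A)_i$ is exactly the $i$-th column of $A$ with its $i$-th entry replaced by $0$, matching the stated recipe.

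I expect the main obstacle to be the dimension claim. Lemma \ref{lemma4.27}(2)--(3) is stated only for $a>0$ and is silent when $a\le 0$, so it cannot be invoked for the critical-node structure here; the genuinely new observation one must supply is that, once $\lambda(A)=0$, every length-$2$ cycle becomes critical, which collapses all of $N_c(A)$ into a single $\sim$-class and is precisely what forces $d(A)=1$ rather than a larger number. A secondary point needing care is that Proposition \ref{theorem00000} is phrased for diagonal entries in ascending order, so before conjugating back one should note that $\Gamma(D)=0$ is invariant under the permutation reordering of the diagonal of $D$.
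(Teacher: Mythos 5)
Your proposal is correct and follows essentially the same route as the paper: compute $\lambda(A)=0$ from Lemma \ref{lemma4.27}, observe that every cycle of length at least $2$ is critical so that $N_c(A)/\sim$ is a single class and $d(A)=1$, conjugate to a pseudo-diagonal $D$ with non-positive diagonal, invoke Proposition \ref{theorem00000} to get $\Gamma(D_\lambda)=0$, and transport back to obtain $\Gamma(A_\lambda)=I\oplus A$ before applying Theorem \ref{eigenspace2.4}. The only (immaterial) differences are that you conjugate by the diagonal matrix from Theorem \ref{theorem3.5} and use distributivity of conjugation over $\oplus$, whereas the paper first reorders the diagonal by a permutation matrix and then verifies $\Gamma(A_\lambda)=I\oplus A$ entrywise.
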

	\begin{proof}
		By Lemma \ref{lemma4.27}, we have that $\lambda(A)=0$. Since $\mu(\sigma,A)=0$ for any $\sigma\in\mathcal{P}_n$ with $l(\sigma)\ge2$, all the critical nodes of $A$ are equivalent. Thus $d(A)=1.$
		By a similar argument as given in the proof of Theorem \ref{theorem4.28}, there exists a pseudo-diagonal matrix $D={\rm pdiag}(d_1,\dots,d_n)$ with $d_1\le\dots\le d_n$ and invertible matrix $P$, such that $P^{-1}AP=D$. Denote the permutation of $P$ by $\pi$. By \eqref{xsyg},
		$d_i=a_{\pi^{-1}(i),\pi^{-1}(i)}\leq0$ for any $i\in[n]$. By Proposition \ref{theorem00000}, we have that $\Gamma(D_\lambda)=0$. Set $p_i=P_{i,\pi(i)}$ for each $i\in[n]$, we obtain that $\Gamma(A_\lambda)=P\Gamma(D_\lambda)P^{-1}=P0P^{-1}$, and then $$\Gamma(A_\lambda)_{ij}=p_i0p_j^{-1}=\begin{cases}
			A_{ij} &i\neq j,\\ 0 &i=j.
		\end{cases}$$ That is, $\Gamma(A_\lambda)=I\oplus A$. Therefore, by Theorem \ref{eigenspace2.4}, we complete the proof.
	\end{proof}

	\section*{Acknowledgments}
	 The work is partially supported by the National Natural Science Foundation of China (No. 12271257) and the Natural Science Foundation of Jiangsu Province of China (No. BK20240137).
%

\end{document}